\newcommand{\CB}{{\mathrm{CB}}}
\newcommand{\mCB}{{\mathrm{MCB}}}
\renewcommand{\P}{{\mathbb{P}}}
\title{A Cayley--Bacharach theorem and plane configurations}
\date{January 2021}
\theoremstyle{plain}
\newtheorem{theorem}{Theorem}
\newtheorem{proposition}[theorem]{Proposition}
\newtheorem{corollary}[theorem]{Corollary}
\newtheorem{conjecture}[theorem]{Conjecture}
\newtheorem{lemma}[theorem]{Lemma}
\theoremstyle{definition}
\newtheorem{definition}[theorem]{Definition}
\newtheorem{example}[theorem]{Example}
\theoremstyle{remark}
\newtheorem{remark}[theorem]{Remark}
\newtheorem{question}[theorem]{Question}
\newtheorem{problem}[theorem]{Problem}
\numberwithin{figure}{section}
\numberwithin{theorem}{section}
\numberwithin{equation}{section}
\newcommand{\mc}{\mathcal}
\newcommand{\mb}{\mathbb}
\author{Jake Levinson}
\address{
	Mathematics Department \\
	Simon Fraser University \\
	Burnaby, BC V5A 1S6 \\
	Canada}
\thanks{The first author was partly supported by an AMS Simons Travel Grant.}
\email{jake\_levinson@sfu.ca}
\author{Brooke Ullery}
\address{
	Mathematics Department \\
	Emory University \\
	Atlanta, GA 30322 \\
	USA}
\email{bullery@emory.edu}
\keywords{projective geometry, Cayley--Bacharach, plane arrangement, irrationality}
\subjclass[2020]{14N05, 14N20}
\begin{document}

\begin{abstract}
In this paper, we examine linear conditions on finite sets of points in projective space implied by the Cayley--Bacharach condition. In particular, by bounding the number of points satisfying the Cayley--Bacharach condition, we force them to lie on unions of low-dimensional linear spaces.

These results are motivated by investigations into degrees of irrationality of complete intersections, which are controlled by minimum-degree rational maps to projective space. As an application of our main theorem, we describe the fibers of such maps for certain complete intersections of codimension two.
\end{abstract}

\maketitle

\section{Introduction} \label{sec:introduction}

Let $\Gamma \subset \P^n$ be a finite set of points. We say that $\Gamma$ \textbf{satisfies the Cayley--Bacharach condition with respect to degree $r$ polynomials} or simply \textbf{is $\CB(r)$} if, whenever a homogeneous degree $r$ polynomial $F$ vanishes at all but one point of $\Gamma$, $F$ vanishes at the last point.

This notion was familiarized by the classical 19th century theorem of Cayley and Bacharach that states that if two plane curves of degree $d$ and $e$, respectively, meet in $de$ points, then any curve of degree $d+e-3$ that contains all but one of the intersection points must contain them all. That is, this set of $de$ points is $\CB(d+e-3)$. Since then, there have been many generalizations and conjectures based on this result. See \cite{EGH} for a thorough history and several questions that remain open; see \cite{KLR19} for a more algebraic viewpoint, particularly for the case where $r+1$ is the Castelnuovo--Mumford regularity of $\Gamma$.

The Cayley--Bacharach condition has come up recently in the context of measures of irrationality for projective varieties. Specifically, if $X$ is a smooth variety of dimension $n$ and $\Gamma \subset X$ is a finite set of points, we  say $\Gamma$ satisfies the Cayley--Bacharach condition with respect to a linear system $|V|$ if any section $s \in V$ vanishing at all but one point of $\Gamma$ vanishes at the last point. In the case where $\Gamma$ is a general fiber of a generically finite dominant rational map $X \dashrightarrow \P^n$, Bastianelli showed that $\Gamma$ satisfies the Cayley--Bacharach condition with respect to the canonical linear system $|K_X|$ (see \cite[Proposition 4.2]{Bas}). 

Several recent papers have exploited the Bastianelli result to compute the gonality, or the degree of irrationality, a higher dimensional analog of gonality, of several classes of smooth complete intersection varieties in projective space (see e.g. \cite{BCD}, \cite{BDELU}, \cite{SU}, \cite{HLU}). The theme is that if the canonical bundle of $X$ is sufficiently positive (as, for example, in the case of high degree complete intersections), the fibers are forced to lie in special positions.

For instance, to this end, Bastianelli, Cortini, and De Poi prove the following: 

\begin{theorem}{\cite[Lemma 2.4]{BCD}}
If $\Gamma \subset \P^n$ is $\CB(r)$ and \begin{equation}\label{eqn}|\Gamma| \leq 2r+1,\end{equation} then $\Gamma$ lies on a line.
\end{theorem}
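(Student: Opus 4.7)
The plan is to prove the contrapositive: assuming $\Gamma$ is not contained in a line, I exhibit a point $p \in \Gamma$ and a degree $r$ form $F$ vanishing on $\Gamma \setminus \{p\}$ but not at $p$, contradicting $\CB(r)$. The form $F$ will be the product of defining equations of $r$ hyperplanes $H_1, \ldots, H_r$, each avoiding $p$, whose union covers $\Gamma \setminus \{p\}$; if I can cover with $k < r$ hyperplanes, I pad the product with $r-k$ arbitrary linear forms nonvanishing at $p$.

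I split into two cases based on whether some line $\ell^* \subset \P^n$ contains at least $r+2$ points of $\Gamma$. In the \textbf{large-line case}, since $\Gamma \not\subset \ell^*$ I can pick $p \in \Gamma \setminus \ell^*$; a hyperplane $H_1 \supset \ell^*$ avoiding $p$ (which exists because $p \notin \ell^*$) covers $\Gamma \cap \ell^*$ in one shot, leaving at most $|\Gamma| - 1 - (r+2) \leq r - 2$ remaining points of $\Gamma \setminus \{p\}$, each handled by its own hyperplane avoiding $p$, for $\leq r - 1$ hyperplanes in total. In the \textbf{small-line case}, every line meets $\Gamma$ in at most $r+1$ points, and for any $p \in \Gamma$ I partition $\Gamma \setminus \{p\}$ into classes $C_1, \ldots, C_s$ according to which line through $p$ contains each point, with sizes $n_1 \geq \cdots \geq n_s$ summing to $N = |\Gamma| - 1 \leq 2r$ and each $n_i \leq r$. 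Any cross-class pair $\{a, b\}$ satisfies $p \notin \overline{ab}$, so some hyperplane through $a,b$ avoids $p$; hence it suffices to group $\Gamma \setminus \{p\}$ into $\leq r$ parts, each a singleton or cross-class pair. The maximum matching in the complete multipartite graph on the $C_i$ has size $\min(\lfloor N/2 \rfloor, N - n_1)$, leaving $\max(\lceil N/2 \rceil, n_1) \leq r$ parts as required.

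The main technical obstacle is the interaction between $p$ and lines containing many points of $\Gamma$: if $p$ lay on such a line, those $\Gamma$-points would form a single class that cannot be paired among itself, and the matching argument would break down. This is precisely what forces the case split, with the large-line case placing $p$ off the heavy line and dispatching the entire line with a single hyperplane; conversely, once every line is small the matching-theoretic bookkeeping balances and we only need the elementary fact that $\lceil N/2 \rceil \leq r$.
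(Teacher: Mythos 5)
Your proof is correct, but it takes a genuinely different route from the paper's. The paper proves this as its Theorem 5.4 by induction on $r$: taking a line $A$ through a maximal number $\alpha$ of points of $\Gamma$, it applies the excision property (Proposition 2.3) to conclude that $\Gamma_B = \Gamma \setminus A$ is $\CB(r-1)$, invokes the inductive hypothesis to put $\Gamma_B$ on a line, and then derives the contradiction $|\Gamma| \geq 2r+2$ from the basic lower bound $\beta \geq r+1$ (Proposition 2.6) together with the maximality $\beta \leq \alpha$. Your argument is instead direct and non-inductive: you build, for a suitable $p \in \Gamma$, a union of at most $r$ hyperplanes avoiding $p$ and covering $\Gamma \setminus \{p\}$, splitting into the heavy-line case (one hyperplane swallows the $\geq r+2$ collinear points, leaving $\leq r-2$ stragglers) and the case where all lines are small, where the pairing of points from distinct classes through $p$ into hyperplanes is exactly a maximum matching in a complete multipartite graph, giving $\max(\lceil N/2\rceil, n_1) \leq r$ hyperplanes. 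Both case analyses check out (in the heavy-line case the size bound forces $r \geq 2$ automatically, and the matching formula you quote is standard). What the paper's approach buys is uniformity: the same setup, excision, and lower bound drive the later cases $d=2,3$ and $(d,r)=(4,3)$, so the $d=1$ proof is a template. What your approach buys is that it only ever tests $\Gamma$ against completely reducible forms, i.e.\ unions of at most $r$ hyperplanes; so it in fact proves the conclusion under the weaker matroid-type hypothesis $\mCB(r)$ discussed in Section 7 of the paper, and it needs no base case or induction. The one thing to make explicit if you write this up is the (easy, standard) matching fact for complete multipartite graphs, and the observation that a hyperplane through two points $a,b$ avoiding $p$ exists precisely because $p \notin \overline{ab}$, which you have correctly isolated as the reason for the case split.
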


This result allows them to show that the fibers of rational maps to $\P^r$ from a high degree hypersurface $X \subset \P^{r+1}$ are collinear. 

In other (non-hypersurface) examples, however, we do not expect the fibers to be collinear. A natural question, then, is: what can one say about the geometry of $\Gamma$ when the bound (\ref{eqn}) is weakened? For instance, when the bound is raised to $|\Gamma| \leq \tfrac{5}{2}r +1$, Stapleton and the second author show in \cite{SU} that $\Gamma$ lies on either a plane conic or two skew lines (see also Remark \ref{SUrem}).

In this paper, we examine linear conditions on $\Gamma$ implied by the Cayley--Bacharach condition. We call a union $\mc{P} = P_1 \cup \cdots \cup P_k \subseteq \mathbb{P}^n$ of positive-dimensional linear spaces a {\bf plane configuration of dimension} $\dim(\mc{P}) = \sum \dim(P_i)$ {\bf and length} $\ell(\mc{P}) = k$. We propose the following conjecture:

\begin{conjecture} \label{conj:main}
Let $\Gamma \subset \mathbb{P}^n$ be a finite set of points satisfying $\CB(r)$. If 
\begin{equation} \label{eqn:conj-bound}
|\Gamma| \leq (d+1)r + 1,
\end{equation}
then $\Gamma$ lies on a plane configuration $\mc{P}$ of dimension $d$.
\end{conjecture}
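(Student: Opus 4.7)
My plan is to argue by induction on $d$, with base case $d = 1$ given by the theorem of Bastianelli, Cortini, and De Poi quoted above (a single line is a plane configuration of dimension $1$ and length $1$). For the inductive step, assume the conjecture holds for all $d' < d$ and all $r$, and let $\Gamma \subset \mathbb{P}^n$ be $\CB(r)$ with $|\Gamma| \leq (d+1)r + 1$. If $|\Gamma| \leq dr + 1$, then the inductive hypothesis at $(d-1, r)$ applies directly and places $\Gamma$ on a plane configuration of dimension $d - 1 \leq d$; so I would assume $dr + 2 \leq |\Gamma| \leq (d+1)r + 1$.

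The central tool is the residual form of the Cayley--Bacharach property: for any hyperplane $H \subset \mathbb{P}^n$, the set $\Gamma \setminus H$ is $\CB(r-1)$. The plan is to locate a hyperplane $H$ with $|\Gamma \cap H| \geq r + d$, so that
\[
|\Gamma \setminus H| \leq (d+1)r + 1 - (r + d) = d(r-1) + 1.
\]
The inductive hypothesis at $(d-1, r-1)$ then produces a plane configuration $\mc{P}'$ of dimension $d - 1$ containing $\Gamma \setminus H$. Showing in addition that $\Gamma \cap H$ lies on a single line $\ell \subset H$ would complete the proof, giving $\mc{P} = \mc{P}' \cup \ell$ as the desired configuration of dimension $d$ and length $\ell(\mc{P}') + 1$.

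I expect the main obstacle to be a joint one: guaranteeing that a hyperplane $H$ exists with enough points of $\Gamma$ on it, \emph{and} with $\Gamma \cap H$ collinear. The existence of some hyperplane meeting $\Gamma$ in many points should follow from a Hilbert-function or pigeonhole argument, exploiting the assumption $|\Gamma| > dr + 1$ which prevents $\Gamma$ from being in too general linear position. Collinearity is far more delicate, since $\Gamma \cap H$ inherits no direct Cayley--Bacharach condition. A natural refinement is to choose $H$ adapted to a provisional configuration---for instance, a hyperplane containing all but one of the components of some tentative $\mc{P}$---so that $\Gamma \cap H$ is constrained from the outset. This suggests a finer induction, perhaps on the length $\ell(\mc{P})$ of the target configuration in addition to $d$, in which one bootstraps from a partial configuration before slicing with each new hyperplane; getting the lengths and dimensions of the pieces to sum correctly, without double-counting or creating degeneracies when components of $\mc{P}'$ happen to lie in $H$, is likely to be the most technical point.
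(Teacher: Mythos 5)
First, note that the statement you are proving is Conjecture \ref{conj:main}, which the paper does \emph{not} prove in general: Theorem \ref{thm:main} establishes only the cases $r \leq 2$, $d \leq 3$, and $(d,r)=(4,3)$, so any complete proof sketch of the full conjecture should be viewed with suspicion. Your plan has a concrete gap beyond the one you acknowledge. The inductive step requires a hyperplane $H$ with $|\Gamma \cap H| \geq r+d$ \emph{and} $\Gamma \cap H$ collinear; since any line extends to a hyperplane meeting $\Gamma$ only in that line (Proposition \ref{prop:extend-to-hyperplane}), this amounts to demanding a line containing at least $r+d$ points of $\Gamma$. No such line need exist: in Example 3.4 of the paper ($d=4$, $r=3$, eight general points on a smooth conic in each of two disjoint $2$-planes, $|\Gamma|=16=(d+1)r+1$), no three points of $\Gamma$ are collinear, so every line carries at most $2$ points while $r+d = 7$; similarly for Example \ref{ex:elliptic-curve} (nine general points on a quartic elliptic curve, $r=2$, $d=3$), where the unique $3$-dimensional configuration containing $\Gamma$ is a single $\mathbb{P}^3$ and no line holds $r+d=5$ points. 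So the target configuration cannot in general be built as ``$(d-1)$-dimensional configuration plus a line,'' and even the weaker claim that some hyperplane contains $r+d$ points of $\Gamma$ is not supplied by pigeonhole or a Hilbert-function count (points on a rational normal curve show how close to general position a $\CB(r)$ set can be).

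The paper's own strategy is different in a way that responds exactly to this difficulty. It observes (Section \ref{sec:key-lemmas}) that the excision property only yields $\CB(r-1)$ for residual sets, which ``seems too weak to allow inductive approaches''; instead it proves lemmas showing that suitable subsets of $\Gamma$ retain the full condition $\CB(r)$ --- namely when $\Gamma$ lies on a split configuration (Lemma \ref{lem:disjoint-planes}) or on two planes meeting at a point (Lemma \ref{lem:union-of-two-at-point}) --- and then, for fixed small $d$, inducts on $r$ starting from a $d$-plane $A$ containing the maximal number of points of $\Gamma$, analyzing the residual set $\Gamma \setminus A$ and gluing configurations via Propositions \ref{prop:disjoint planes case} and \ref{planes meeting at a point}. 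Your proposal, as it stands, replaces this machinery with a slicing step that fails on known examples, so the argument does not go through.
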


Note that this bound is easily shown to be tight: a counterexample for $(d+1)r+2$ is given by points on a rational normal curve (see Example  \ref{ex:rational-normal-curve}).

Our main result is as follows.

\begin{theorem} \label{thm:main}
Conjecture \ref{conj:main} holds in the following cases:
\begin{itemize}
    \item[(i)] For $r \leq 2$ and all $d$. Moreover, we may take $\ell(\mc{P}) = 1$. 
    \item[(ii)] For all $r$ and for $d \leq 3$. Moreover, for $r \leq 4$, we may take $\ell(\mc{P}) \leq 2$. \item[(iii)] For $d=4$ and $r=3$. Moreover, we may take $\ell(\mc{P}) \leq 2$.
\end{itemize}
\end{theorem}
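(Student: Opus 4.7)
The central tool is the \textbf{residual Cayley--Bacharach principle}: if $\Gamma \subset \P^n$ satisfies $\CB(r)$ and $H \subset \P^n$ is a hypersurface of degree $s \leq r$, then $\Gamma \setminus (\Gamma \cap H)$ satisfies $\CB(r-s)$ (multiply a witness polynomial for the residual by the defining equation of $H$). Complementarily, the $\CB(r)$ condition is preserved under restriction to a linear subspace containing $\Gamma$. Together these enable induction on $r$ (by cutting with a hyperplane) and on the ambient dimension $n$ (by passing to the span). The overall plan is to induct on $r$ and $d$, using (i) as the base and calling (ii) as a black box inside (iii).

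\textbf{Case (i), $r \leq 2$.} For $r = 1$ the $\CB(1)$ condition forces $\Gamma$ into the linear span $L$ of any $|\Gamma|-1$ of its points: every hyperplane through $L$ contains $\Gamma$ by $\CB(1)$, so the missing point lies in the intersection of all such hyperplanes, which is $L$ itself. Thus $\dim\langle\Gamma\rangle \leq |\Gamma| - 2 \leq d$. For $r = 2$ suppose for contradiction that $\Gamma$ spans a $k$-plane with $k \geq d+1$. I would apply $\CB(2)$ to products $\ell_1\ell_2$ of linear forms, choosing $\ell_1$ to vanish on a well-chosen $(d+1)$-tuple spanning a $d$-plane and $\ell_2$ on the remaining points minus one; the $\CB(2)$ condition then forces the last point onto $\{\ell_1 \ell_2 = 0\}$, and varying the choice yields $\Gamma \subset \{\ell_1 = 0\}$, collapsing the span of $\Gamma$ to a $d$-plane.

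\textbf{Case (ii), $d \leq 3$, all $r$.} Induct on $r$ with base $r \leq 2$ from (i); the case $d = 1$ is the Bastianelli--Cortini--De Poi theorem. For $d \in \{2,3\}$ and $r \geq 3$, choose a hyperplane $H$ maximizing $|\Gamma \cap H|$ in an appropriate sense. The residual $\Gamma' = \Gamma \setminus H$ is $\CB(r-1)$; provided $|\Gamma \cap H| \geq d+2$, one has $|\Gamma'| \leq (d+1)(r-1) + 1$, so by induction in $r$ the set $\Gamma'$ lies on a plane configuration $\mc{P}'$ of dimension $\leq d$. In parallel, $\Gamma \cap H$ inherits $\CB(r)$ in $H \cong \P^{n-1}$ and one recursively places it on a configuration $\mc{P}_H$. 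The delicate step is to merge $\mc{P}_H$ and $\mc{P}'$ into a single plane configuration of dimension at most $d$: this requires $H$ to be chosen so that $\Gamma \cap H$ already lies on codimension-one pieces inside $H$, or so that $\mc{P}_H$ and $\mc{P}'$ share common linear components. When $r \leq 4$ the inductive production of each piece yields at most one component, giving the length bound $\ell(\mc{P}) \leq 2$.

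\textbf{Case (iii), $d = 4, r = 3$.} With $|\Gamma| \leq 16$ and $\CB(3)$, we are just outside the reach of (ii). I would seek a hyperplane $H$ containing a sizable subset of $\Gamma$: apply (ii) inside $H$ to $\Gamma \cap H$ (with $\CB(3)$ and bound $\leq 4\cdot 3 + 1 = 13$ if $d=3$), and apply (i) to $\Gamma \setminus H$ which is $\CB(2)$. Balancing the dimensions to $1+3=4$ or $2+2=4$, and verifying that the two constituents can be chosen disjoint in dimension, yields the two-component configuration.

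\textbf{Main obstacle.} The essential difficulty throughout is that the inductive argument produces \emph{two} plane configurations — one from $\Gamma \cap H$, one from the residual — whose union a priori has total dimension up to $2d$. Forcing the sum down to $d$ (and the length down to $2$ when claimed) requires a very careful choice of cutting hyperplane to guarantee that the two configurations overlap or align. This coordination, rather than the residual trick itself, is what limits the cases in which Conjecture \ref{conj:main} is currently proved.
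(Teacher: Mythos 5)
There is a genuine gap, and it sits exactly where you locate your ``main obstacle.'' Your plan for cases (ii) and (iii) rests on the claim that $\Gamma \cap H$ ``inherits $\CB(r)$ in $H \cong \P^{n-1}$.'' This is false in general: a degree-$r$ form on $H$ vanishing at all but one point of $\Gamma \cap H$ need not extend to a degree-$r$ form on $\P^n$ vanishing at the points of $\Gamma$ off $H$, so the $\CB(r)$ hypothesis on $\Gamma$ cannot be invoked. The only results of this kind that hold are much more restrictive, and they are the actual engine of the paper's proof: if $\Gamma$ lies on two \emph{disjoint} planes, each piece is $\CB(r)$ with the same $r$ (Lemma \ref{lem:disjoint-planes}), and if $\Gamma$ lies on two planes meeting at a point $p$, then at least one of $\Gamma_A$, $\Gamma_A \cup \{p\}$ is $\CB(r)$, etc.\ (Lemma \ref{lem:union-of-two-at-point}). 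Without some such statement, the dimension of the configuration produced by your induction cannot be controlled, which is precisely the coordination problem you concede you cannot resolve; so the proposal does not yet prove (ii) or (iii).

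The paper's route avoids cutting by a hyperplane altogether. Following Remark \ref{rmk:setup}, one takes $A$ to be a $d$-plane containing the \emph{maximum} number $\alpha$ of points of $\Gamma$, so that by excision (Proposition \ref{prop:complements}, your residual trick, which is correct) $\Gamma_B = \Gamma \setminus A$ is $\CB(r-1)$; induction on $r$ plus counting against the maximality of $\alpha$ then forces $\Gamma_B$ to lie on a \emph{low-dimensional} plane $B$ (a line in the cases $d=2,3$), not merely on some $d$-dimensional configuration. One then concludes via Propositions \ref{prop:disjoint planes case} and \ref{planes meeting at a point}: these use the same-$r$ lemmas above together with the inductive lower bound $|S| \geq d_1 r + 2$ (Proposition \ref{prop:inductive-lower-bound}) to split $|\Gamma|$ between the two planes and obtain a configuration of total dimension exactly $d$. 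Your case (i) is essentially fine for $r=1$ (it matches Proposition \ref{r=1}), but the $r=2$ sketch is also incomplete: $\CB(2)$ applied to $\ell_1\ell_2$ only places the last point on the union $\{\ell_1\ell_2=0\}$, and ``varying the choice'' does not by itself rule out that it lands on $\{\ell_2=0\}$; the paper's Proposition \ref{r=2} instead shows, via two applications of excision and the maximality of $\alpha$, that every $d+1$ points of $\Gamma_A$ are linearly dependent, contradicting that $\Gamma_A$ spans a $d$-plane.
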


We believe a stronger statement is true in at least two respects. First, for each $d$, we believe the \emph{length} of the plane configuration can be bounded in terms of $r$, as in the above theorem. Note that this maximum length can become arbitrarily large as $r$ increases (Example \ref{ex:d-lines}). See also Corollary \ref{cor:d-lines-bound}, which gives constraints on $\frac{r}{d-1}$ in the case where $\mc{P}$ has maximum length, i.e. $\mc{P}$ is $d$ disjoint lines.

Second, we believe $\Gamma$ lies on certain unions of curves of low degree and arithmetic genus, whose exact description depends on refinements of the bound on $|\Gamma|$, as in the result of Stapleton and the second author. See Section \ref{sec:conclusion} for additional discussion.

Theorem \ref{thm:main} directly implies a result about the geometry of fibers of low degree maps to projective space from certain codimension two complete intersections:

\begin{theorem}\label{codim2thm}
Assume $n \geq 0$. Let $X \subset \P^{n+2}$ be an irreducible complete intersection of a quartic hypersurface Y and a hypersurface of degree $a\geq 4n-5.$
\begin{enumerate}
    \item[(a)] If $f: X \dashrightarrow \P^n$ is a finite rational map of degree $\leq 3a$ and $\Gamma \subset X$ is a fiber of $f$, then $\Gamma$ lies on a plane configuration of dimension $3$.
    
    \item[(b)] In particular, %if $n \geq 2$, or more generally if $Y$ contains a line, 
    the conclusion of (a) holds for any dominant rational map $f: X \dashrightarrow \P^n$ of minimum degree.
\end{enumerate}

\end{theorem}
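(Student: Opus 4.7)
The plan is to reduce part (a) to the $d=3$ case of Theorem \ref{thm:main}(ii) by combining Bastianelli's Cayley--Bacharach principle with adjunction, and to handle part (b) by exhibiting an explicit degree $3a$ map obtained by projection from a line on $Y$.

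For (a), I would proceed in three steps. First, by adjunction applied to the codimension two complete intersection $X \subset \P^{n+2}$ of type $(4, a)$, the canonical bundle satisfies $K_X \cong \mc{O}_X(r)$ with $r = 4 + a - (n+3) = a - n + 1$. Second, the Bastianelli result cited in the introduction gives that the fiber $\Gamma$ of the finite rational map $f$ satisfies the Cayley--Bacharach condition on $X$ with respect to $|K_X|$. Third, I would lift this to the ambient $\P^{n+2}$: the Koszul resolution of $\mc{O}_X$ by line bundles on $\P^{n+2}$, together with vanishing of intermediate cohomology of line bundles on projective space, shows that the restriction map $H^0(\P^{n+2}, \mc{O}(r)) \twoheadrightarrow H^0(X, \mc{O}_X(r))$ is surjective, so every section of $K_X$ is the restriction of a degree $r$ form on $\P^{n+2}$. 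Hence $\Gamma$ is $\CB(r)$ as a subset of $\P^{n+2}$.

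The numerical punch line is then immediate. Using $|\Gamma| \leq \deg f \leq 3a$ together with the hypothesis $a \geq 4n - 5$,
\[
|\Gamma| \;\leq\; 3a \;\leq\; 4a - 4n + 5 \;=\; 4(a - n + 1) + 1 \;=\; (3+1)r + 1,
\]
so Theorem \ref{thm:main}(ii) with $d = 3$ applies and forces $\Gamma$ onto a plane configuration of dimension $3$, establishing (a).

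For (b), it suffices to exhibit any dominant rational map $X \dashrightarrow \P^n$ of degree at most $3a$, for then the minimum-degree map has degree $\leq 3a$ and (a) applies. I would take a general line $L \subset Y$ not contained in the degree $a$ hypersurface $Z$ cutting out $X$, and project $X$ from $L$. A general fiber over $p \in \P^n$ is the residual of $L$ inside $X \cap \langle L, p\rangle$; by B\'ezout applied in the plane $\langle L, p\rangle \cong \P^2$ (where $Y$ cuts out a plane quartic containing $L$ and $Z$ cuts out a curve of degree $a$), this consists of $4a - a = 3a$ points, giving the desired map. The main obstacle is really the clean transfer of $\CB$ from $X$ to the ambient space (standard but requiring care) and the existence of a suitable line $L$ in (b); the latter is automatic when $n \geq 2$ since the Fano scheme of lines on a quartic in $\P^{n+2}$ has expected positive dimension $2n - 3$, and the low-dimensional cases would need individual attention. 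Conceptually, the heart of the argument is simply the observation that the hypothesis $a \geq 4n - 5$ is precisely engineered so that $3a$ fits under the bound $(3+1)r + 1$.
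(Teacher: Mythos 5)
Your outline is essentially the paper's own argument: the paper proves the general Proposition \ref{codim2prop} (Bastianelli plus adjunction gives $\CB(a+b-n-3)$, the degree hypothesis makes $|\Gamma|\leq a(b-1)$ fit under $(d+1)r+1$ with $d=b-1$, and the bound on the minimal degree comes from projecting from a line on the degree-$b$ hypersurface), and Theorem \ref{codim2thm} is the specialization $b=4$, so $d=3$, $r=a-n+1$, where Theorem \ref{thm:main}(ii) supplies the needed case of the conjecture. Your numerics are correct: $3a\leq 4(a-n+1)+1$ is exactly $a\geq 4n-5$.

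Two points need attention. First, part (a) concerns an \emph{arbitrary} fiber of $f$, whereas Bastianelli's result applies only to a \emph{general} fiber of a generically finite dominant map; the paper bridges this by observing that ``lying on a plane configuration of dimension $3$'' is a closed condition, so it passes from the general fiber to all fibers. Your write-up applies Bastianelli directly to the given fiber, which is a genuine (if small) gap against the statement as written. Relatedly, the Koszul surjectivity step is pointing the wrong way and is not needed: to deduce $\CB(r)$ in $\P^{n+2}$ from the Cayley--Bacharach property with respect to $|K_X|$, one only uses that a degree-$r$ form restricts to a section of $\mc{O}_X(r)\cong K_X$; surjectivity of restriction would be relevant only for the converse implication. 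Second, in (b) you do not need a line on $Y$ avoiding the degree-$a$ hypersurface: if the chosen line lies in $X$, projection from it still gives a dominant map, now of degree $3(a-1)$ by the excess-intersection count, which is still $\leq 3a$ -- this is exactly how the paper disposes of that case, and it removes the existence issue you flag. The low-dimensional cases you set aside are trivial: for $n\leq 1$ all of $\P^{n+2}$ is already contained in a plane configuration of dimension $3$, and for $n=0$ the irreducible $X$ is a single point.
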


\begin{remark}\label{SUrem}
The cases of hypersurfaces of quadrics and cubics were dealt with in \cite{SU}. The authors showed that in the former case, the fibers must be collinear, and in the latter, the fibers must lie on plane conics or two skew lines -- in particular, the fibers lie on plane configurations of dimension two. 
\end{remark}

Applying the conclusion of Conjecture \ref{conj:main}, we obtain the following more general conjectural result about codimension two complete intersections of arbitrary degree.

\begin{proposition}\label{codim2prop}
Assume $n \geq 0$. Let $X \subset \P^{n+2}$ be an irreducible complete intersection of hypersurfaces $Y_a$ and $Y_b$, of degrees $a, b \geq 2$, respectively. Suppose that Conjecture \ref{conj:main} holds for $d = b-1$ and $r = a+b-n-3$ and that $a+1 \geq b(3+n-b) .$
\begin{enumerate}
    \item[(a)] If $f: X \dashrightarrow \P^n$ is a finite rational map of degree $\leq a(b-1)$ and $\Gamma \subset X$ is a fiber of $f$, then $\Gamma$ lies on a plane configuration of dimension $b-1$.
    
    \item[(b)] In particular, the conclusion of (a) holds for any dominant rational map $f: X \dashrightarrow \P^n$ of minimum degree.
\end{enumerate}

\end{proposition}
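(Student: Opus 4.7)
My plan is to mirror the strategy used for Theorem \ref{codim2thm}, translating the Cayley--Bacharach condition for the canonical linear system on $X$ into a projective $\CB(r)$ condition in $\P^{n+2}$ and then invoking the hypothesized case of Conjecture \ref{conj:main}.

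For part (a), I would start by letting $\Gamma \subset X$ be a general fiber of $f$, so that $|\Gamma| = \deg(f) \leq a(b-1)$. By Bastianelli's theorem \cite[Proposition 4.2]{Bas}, $\Gamma$ satisfies the Cayley--Bacharach condition with respect to $|K_X|$. Since $X$ is a complete intersection of hypersurfaces of degrees $a,b$ in $\P^{n+2}$, adjunction gives $K_X = \mathcal{O}_X(a+b-n-3)$, and the restriction map $H^0(\P^{n+2}, \mathcal{O}(r)) \to H^0(X, \mathcal{O}_X(r))$ is surjective for $r := a+b-n-3$ by the standard cohomology of complete intersections. Putting these together lets me promote Bastianelli's intrinsic statement to the assertion that $\Gamma \subset \P^{n+2}$ is $\CB(r)$: any degree-$r$ form on $\P^{n+2}$ vanishing on $\Gamma \setminus \{p\}$ restricts to a section of $K_X$ with the same property, hence also vanishes at $p$.

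Next I would verify that the numerical hypothesis of Conjecture \ref{conj:main} is satisfied with $d = b-1$ and $r$ as above. The required bound $|\Gamma| \leq (d+1)r+1 = b(a+b-n-3)+1$ reduces, using $|\Gamma| \leq a(b-1)$, to $a+1 \geq b(3+n-b)$, which is exactly the standing assumption. Applying the (assumed) conclusion of Conjecture \ref{conj:main} then forces $\Gamma$ to lie on a plane configuration of dimension $b-1$, proving (a).

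For part (b), it suffices to exhibit a finite rational map $X \dashrightarrow \P^n$ of degree at most $a(b-1)$, after which (a) applies. My approach is to take a line $L \subset Y_b$ and project $X$ from $L$: a general $2$-plane through $L$ meets $X$ in $\deg(X) = ab$ points, of which $|L \cap X| = |L \cap Y_a| = a$ lie on $L$, yielding a map of degree $a(b-1)$. I expect the chief technical points to be the restriction-map surjectivity (routine but essential for linking the intrinsic and embedded CB conditions), and the verification that a suitable line $L \subset Y_b$ exists so that the projection in (b) is available; for small $n$, where the Fano scheme of lines in $Y_b$ may be empty, an alternative degree-$a(b-1)$ construction would have to be arranged.
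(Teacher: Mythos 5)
Your overall strategy is the same as the paper's (Bastianelli plus the numerical check $a(b-1)\leq b(a+b-n-3)+1 \Leftrightarrow a+1\geq b(3+n-b)$ for part (a); projection from a line on $Y_b$ for part (b)), and the core computations are correct, but there are two genuine gaps. First, in part (a) the statement concerns an \emph{arbitrary} fiber $\Gamma$ of $f$, while Bastianelli's result (and the identity $|\Gamma|=\deg f$) applies only to a \emph{general} fiber; the paper bridges this by observing that ``lying on a plane configuration of dimension $b-1$'' is a closed condition, so verifying it on general fibers forces it on all fibers. Your write-up only treats the general fiber and never passes to the rest. (Incidentally, the surjectivity of $H^0(\P^{n+2},\mathcal{O}(r))\to H^0(X,\mathcal{O}_X(r))$ that you flag as essential is not actually needed for the direction you use: a degree-$r$ form restricting to a section of $K_X$ is exactly the implication required, and that needs only adjunction.)

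Second, in part (b) you explicitly leave unresolved the existence of a line $L\subset Y_b$ when the Fano scheme could be empty, saying an ``alternative construction would have to be arranged.'' This is where the hypotheses must be used: the paper disposes of the problematic regimes by noting the conclusion is trivial when $n=0$ ($X$ is a point) and when $n+2\leq b-1$ ($\Gamma\subseteq\P^{n+2}$ already lies in a configuration of dimension $\leq b-1$), and in the remaining range $n\geq 1$, $n+2\geq b$ one has $\dim F(Y_b)\geq 2(n+2)-b-3\geq n-1\geq 0$, so a line exists — no alternative construction is needed. Relatedly, your degree count assumes $L\cap X=L\cap Y_a$ has length $a$, i.e.\ $L\not\subset X$; if $L\subset X$ the projection has degree $(a-1)(b-1)$ by excess intersection, which is still $\leq a(b-1)$, but this case should be stated rather than silently excluded. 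With these points added, your argument matches the paper's proof.
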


The paper is organized as follows. In Section \ref{sec:preliminaries}, we introduce the notion of a plane configuration and prove basic results about the Cayley--Bacharach condition and plane configurations which we make frequent use of throughout the remainder of the paper. In Section \ref{sec:examples}, we give some concrete examples. Sections \ref{sec:key-lemmas} and \ref{sec:proof-main} are devoted to the proof of the main theorem and requisite lemmas. Section \ref{sec:codim-2-ci} gives a short proof of Theorem \ref{codim2thm} and Proposition \ref{codim2prop}. Finally, in Section \ref{sec:conclusion}, we discuss directions for potential research and leave the reader with several open questions.

The authors would like to thank Nic Ford, Joe Harris, Rob Lazarsfeld, Jessica Sidman, and David Stapleton.

\section{Preliminaries} \label{sec:preliminaries}

Our main object of interest is the following.

\begin{definition}
A {\bf plane configuration} $\mathcal{P}$ is a union of distinct positive-dimensional linear spaces $\mathcal{P} =  \bigcup_{i=1}^k P_i$ in $\mathbb{P}^n$. We say its {\bf dimension} is $\dim \mathcal{P} = \sum_i \dim P_i$ and its {\bf length} is $\ell(\mathcal{P}) = k$.
\end{definition}

For $\mathcal{P}$ or any collection of linear spaces $P_1, \ldots, P_k$, we write $\mathrm{span}(\mathcal{P})$ or $\mathrm{span}(P_i : i=1, \ldots, k)$ for the linear span of their union.

Thus a $2$-dimensional plane configuration is either two lines or one $2$-plane. In a sufficiently large $\mathbb{P}^n$, if the planes of $\mc{P}$ are in general position then $\mathrm{span}(\mc{P})$ has dimension $\dim \mc{P} + \ell(\mc{P}) - 1$. Note that, by definition, a $0$-dimensional plane configuration is empty.

\begin{definition}
We say a plane configuration $\mc{P}$ is {\bf skew} if $P_i \cap P_j = \emptyset$ for all $i \ne j$. We say it is {\bf split} if, in addition, $\mc{P}$ is a projectivized direct sum decomposition of the affine cone over $\mathrm{span}(\mc{P})$. Equivalently,
\[\dim \mathrm{span}(\mc{P}) = \dim(\mc{P}) + \ell(P) - 1.\]
Equivalently, for all $i$, $P_i \cap \mathrm{span}(P_j : j \ne i) = \emptyset$.
\end{definition}

If there exist $i \ne j$ for which $P_i \cap P_j$ is nonempty, it will frequently be preferable for us to replace $P_i$ and $P_j$ by $\mathrm{span}(P_i, P_j)$. The resulting plane configuration has lower length than $\mc{P}$ and at most the same dimension, while also being larger (as a variety).
Since we wish to cover finite sets $\Gamma \subseteq \mathbb{P}^n$ by low-dimensional plane configurations, we may often freely reduce to $\mc{P}$ being skew.

Note that when $\ell(\mc{P}) = 2$, $\mc{P}$ is skew if and only if $\mc{P}$ is split.

\begin{example}
In $\mathbb{P}^4$, let $\mathcal{P}$ consist of three general lines. Then $\dim \mathcal{P} = 3$ and $\ell(\mathcal{P}) = 3$, but $\mathrm{span}(\mc{P}) = \mb{P}^4$, so $\mc{P}$ is skew but not split.
\end{example}

\subsection{Basic facts}

We now prove some basic facts about plane configurations and the Cayley--Bacharach condition. We will use them throughout the remaining arguments of the paper, often without explicit reference.

\begin{proposition} \label{prop:extend-to-hyperplane}
Let $\Gamma \subseteq \mb{P}^n$ be a finite set and $P$ a linear space. Then $P$ can be extended to a hyperplane $H$ containing no additional points of $\Gamma$. That is, $\Gamma \cap H = \Gamma \cap P$.
\end{proposition}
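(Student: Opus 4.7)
The plan is to extend $P$ to a hyperplane one dimension at a time, at each stage arranging to pick up no new points of $\Gamma$. We may assume $\dim P < n$, since otherwise $P = \mathbb{P}^n$ and there is no ambient hyperplane strictly containing $P$ (but then the statement is vacuous or trivial). I would induct on the codimension $n - \dim P$. In the base case $\dim P = n-1$ one simply takes $H = P$, and the conclusion $\Gamma \cap H = \Gamma \cap P$ is tautological.

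For the inductive step, let $k = \dim P$ with $k < n-1$. The goal is to produce a linear subspace $P' \supseteq P$ with $\dim P' = k+1$ and $\Gamma \cap P' = \Gamma \cap P$, so that the inductive hypothesis applied to $P'$ supplies the desired hyperplane $H$. To construct $P'$, I would pick a point $q \in \mathbb{P}^n$ and set $P' = \mathrm{span}(P, q)$. For each $p \in \Gamma \setminus P$, a dimension count shows that $p \in P'$ if and only if $q \in \mathrm{span}(P, p)$, and $\mathrm{span}(P, p)$ is a proper linear subspace of $\mathbb{P}^n$ of dimension $k+1 < n$. Thus it suffices to choose $q$ in the complement of the finite union $\bigcup_{p \in \Gamma \setminus P} \mathrm{span}(P, p)$.

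The only real content is then the (mild) fact that $\mathbb{P}^n$ is not a finite union of proper linear subspaces, which holds over any infinite ground field and in particular in the paper's setting. Alternatively, one can bypass the induction and argue in a single step via the dual projective space: the hyperplanes of $\mathbb{P}^n$ containing $P$ form a linear space $\Lambda(P) \cong \mathbb{P}^{n-k-1}$, and for each $p \in \Gamma \setminus P$ those hyperplanes that also contain $p$ cut out a proper linear subspace of $\Lambda(P)$; a point of $\Lambda(P)$ off this finite union of proper subspaces corresponds to the desired $H$. Either way, the main (and only) obstacle is the nonemptiness of this complement, which amounts to working over an infinite field.
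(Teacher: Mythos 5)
Your argument is correct. The paper itself omits the proof of this proposition (treating it as standard), so there is no argument to compare against; both of your routes --- the stepwise extension $P \subsetneq P' \subsetneq \cdots \subsetneq H$ avoiding the finitely many subspaces $\mathrm{span}(P,p)$ for $p \in \Gamma \setminus P$, and the one-step dual argument choosing a point of $\Lambda(P) \cong \mathbb{P}^{n-k-1}$ off finitely many proper linear subspaces --- are exactly the kind of standard general-position argument the authors had in mind, and you correctly identify that the only hypothesis needed is an infinite ground field (which holds in the paper's setting). One microscopic slip in the inductive version: when $\Gamma \subseteq P$ the union $\bigcup_{p \in \Gamma \setminus P} \mathrm{span}(P,p)$ is empty, so you must separately require $q \notin P$ to guarantee $\dim P' = k+1$; when $\Gamma \setminus P \neq \emptyset$ this is automatic since each $\mathrm{span}(P,p)$ contains $P$. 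The dual formulation avoids even this bookkeeping and is the cleaner of your two arguments.
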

\begin{proof}
Omitted.
\end{proof}

\begin{proposition}[Excision property] \label{prop:complements}
Let $\Gamma \subseteq \mb{P}^n$ be a finite set satisfying $\CB(r)$. Let $\mc{P}$ be a plane configuration of length $\ell$. Then $\Gamma \setminus \mc{P}$ is $\CB(r-\ell)$.
\end{proposition}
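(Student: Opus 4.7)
The plan is to reduce the statement to the defining Cayley--Bacharach property of $\Gamma$ by producing a degree $\ell$ polynomial that separates $\Gamma \cap \mathcal{P}$ from $\Gamma \setminus \mathcal{P}$. Write $\mathcal{P} = P_1 \cup \cdots \cup P_\ell$. For each $i$, I would apply Proposition \ref{prop:extend-to-hyperplane} to extend $P_i$ to a hyperplane $H_i \supseteq P_i$ with $\Gamma \cap H_i = \Gamma \cap P_i$; let $L_i$ be a linear form defining $H_i$, and set $L := L_1 \cdots L_\ell$, a homogeneous polynomial of degree $\ell$. By construction, $L$ vanishes on all of $\mathcal{P}$ (in particular on $\Gamma \cap \mathcal{P}$), while $L(p) \ne 0$ for every $p \in \Gamma \setminus \mathcal{P}$ (since such a $p$ avoids every $H_i$).

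Now, to verify $\CB(r-\ell)$ for $\Gamma \setminus \mathcal{P}$, suppose $F$ is a homogeneous polynomial of degree $r-\ell$ vanishing at all points of $\Gamma \setminus \mathcal{P}$ except possibly one point $p$. Consider the product $FL$, which has degree $r$. It vanishes on $\Gamma \cap \mathcal{P}$ because $L$ does, and it vanishes on $(\Gamma \setminus \mathcal{P}) \setminus \{p\}$ because $F$ does; hence $FL$ vanishes on $\Gamma \setminus \{p\}$. Since $\Gamma$ is $\CB(r)$, $FL$ must also vanish at $p$. But $L(p) \ne 0$ by the choice of the $H_i$, so $F(p) = 0$, as required.

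The argument is essentially mechanical once Proposition \ref{prop:extend-to-hyperplane} is in hand; the only conceptual point is the hyperplane-extension step, which guarantees that multiplying by $L$ does not force $F$ to vanish at any point of $\Gamma \setminus \mathcal{P}$. Thus there is no real obstacle, and the proof should be short.
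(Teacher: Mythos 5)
Your proof is correct and uses essentially the same idea as the paper: extend the planes to hyperplanes meeting $\Gamma$ only where the planes do, and multiply the degree $r-\ell$ polynomial by the corresponding linear forms to invoke $\CB(r)$ for $\Gamma$. The only (immaterial) difference is that you handle all $\ell$ planes in one step, whereas the paper excises a single plane and iterates.
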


In particular, the complement of $\Gamma$ by a single linear space is $\CB(r-1)$.

\begin{proof}
It suffices to show that, for a single linear space $P$, the set $\tilde\Gamma := \Gamma \setminus P$ is $\CB(r-1)$. By extending $P$ and avoiding $\tilde \Gamma$, we may replace $P$ by a hyperplane $H$.

Let $Z$ be a hypersurface of degree $r-1$ containing all but one point $x \in \tilde \Gamma$. Then $Z \cup H$ has degree $r$ and contains $\Gamma \setminus \{x\}$. Since $\Gamma$ is $\CB(r)$, $Z \cup H$ contains the final point $x$. By construction $H$ does not contain $x$, so $Z$ does.
\end{proof}

\begin{proposition}[Basic lower bound]\label{prop:min-count}
Let $\Gamma \subseteq \mb{P}^n$ be a finite set satisfying $\CB(r)$. If $\Gamma$ is nonempty, $|\Gamma| \geq r+2$.
\end{proposition}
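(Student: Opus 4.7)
The plan is to establish the contrapositive: assuming $\Gamma$ is nonempty with $|\Gamma| \leq r+1$, I will construct a degree $r$ hypersurface that vanishes on all but one point of $\Gamma$ and fails to vanish on the last, contradicting the $\CB(r)$ hypothesis. This is the standard "separate the points with hyperplanes" argument that the paper will likely use repeatedly in more refined forms.

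Concretely, pick any point $x \in \Gamma$ and list the remaining points as $y_1, \ldots, y_{m-1}$, where $m = |\Gamma| \leq r+1$. For each $i$, choose a hyperplane $H_i \subseteq \mathbb{P}^n$ passing through $y_i$ but not through $x$; such a hyperplane exists because the hyperplanes through $y_i$ form a linear system of dimension $n-1 \geq 0$ whose base locus is $\{y_i\}$, and $x \ne y_i$. If $m - 1 < r$, pick additional hyperplanes $H_m, \ldots, H_r$ not passing through $x$ (e.g.\ any hyperplanes in general position missing $x$). Then the degree $r$ hypersurface
\[ Z = H_1 \cup H_2 \cup \cdots \cup H_r \]
has $Z \supseteq \Gamma \setminus \{x\}$ by construction, while $x \notin Z$ since $x$ lies in none of the $H_i$. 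By the $\CB(r)$ condition applied to $Z$, $x$ should lie in $Z$ — contradiction. Hence $|\Gamma| \geq r+2$.

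There is essentially no obstacle here: the only thing to verify is the routine fact that through any point of $\mathbb{P}^n$ one can pass a hyperplane avoiding any fixed finite set of other points. The argument also implicitly uses that $n \geq 1$, which is automatic once $\Gamma$ is nonempty and $\CB(r)$ is a nontrivial condition (if $n = 0$ then $\mathbb{P}^n$ is a point and no degree $r$ polynomial can separate points, so the statement holds vacuously for $r \geq 0$ provided we interpret the hypothesis correctly). I would just remark on this briefly rather than formalize it.
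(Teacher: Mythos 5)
Your proposal is correct and is essentially the paper's own argument: fix $x \in \Gamma$, cover the remaining points by hyperplanes avoiding $x$, and derive a contradiction with $\CB(r)$. The only (harmless) difference is that you pad with extra hyperplanes to reach degree exactly $r$, whereas the paper simply notes the union already has degree $|\Gamma|-1 \leq r$.
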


\begin{proof}
Suppose $1 \leq |\Gamma| \leq r+1$. Fix $x \in \Gamma$ and find hyperplanes $H_i$, each containing exactly one of the other points of $\Gamma$. The union of the $H_i$'s has degree $|\Gamma| - 1 \ \leq r$ and misses $x \in \Gamma$, a contradiction.
\end{proof}
In particular, Conjecture \ref{conj:main} holds in the case $d=0$ (recall that a $0$-dimensional plane configuration is empty).

We also record the following stronger lower bound, whose proof is trivial, for use in inductive arguments below.
\begin{proposition}[Inductive lower bound]
\label{prop:inductive-lower-bound}
Assume Conjecture \ref{conj:main} holds for fixed $r$ and dimension up to $d-1$. Let $\Gamma \subseteq \mathbb{P}^n$ such that $|\Gamma| \leq (d+1)r+1$, satisfying $\CB(r)$. Then if $\Gamma$ does \emph{not} lie on a plane configuration of dimension $d-1$, $|\Gamma| \geq dr+2$.
\end{proposition}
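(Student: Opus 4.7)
The plan is essentially just to invoke the contrapositive of Conjecture \ref{conj:main} at dimension $d-1$. Observe that the statement at dimension $d-1$ reads: if $\Gamma$ is $\CB(r)$ and $|\Gamma| \leq ((d-1)+1)r + 1 = dr+1$, then $\Gamma$ lies on a plane configuration of dimension $d-1$. By hypothesis, this statement is assumed to hold for our $r$ and for all dimensions up to $d-1$, so in particular for dimension $d-1$ itself.

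Taking the contrapositive: if $\Gamma$ is $\CB(r)$ and fails to lie on any plane configuration of dimension $d-1$, then necessarily $|\Gamma| > dr+1$, i.e.\ $|\Gamma| \geq dr+2$. This is exactly the claimed inequality. The hypothesis $|\Gamma| \leq (d+1)r+1$ plays no role in the argument; it is included only because this lower bound will subsequently be applied inside inductive proofs of Conjecture \ref{conj:main} at dimension $d$, where that upper bound is in force.

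There is no real obstacle here; the content is entirely that the contrapositive of the $(d-1)$-dimensional case of Conjecture \ref{conj:main} \emph{is} the desired lower bound, which is why the paper asserts the proof is trivial.
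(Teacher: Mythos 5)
Your proof is correct and is exactly the (only) argument the paper has in mind: the claim is nothing more than the contrapositive of the dimension-$(d-1)$ case of Conjecture \ref{conj:main}, combined with the standing $\CB(r)$ hypothesis, which is why the paper declares the proof trivial and omits it. Your observation that the upper bound $|\Gamma| \leq (d+1)r+1$ is not actually used is also accurate; it is carried along only for the contexts in which the proposition is later applied.
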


\section{Examples} \label{sec:examples}

Our conjecture is sharp in certain respects and not others, as the following examples demonstrate.

\begin{example}[Rational normal curves] \label{ex:rational-normal-curve}
Let $r \geq 2$ and $m = (d+1)r+2$. Let $\Gamma \subset \mathbb{P}^{d+1}$ be any set of $m$ distinct points on a rational normal curve $C \subset \mathbb{P}^{d+1}$ of degree $d+1$. Then $\Gamma$ is in general linear position, hence is \emph{not} contained in a plane configuration of dimension $d$. (Any such configuration contains at most $2d$ points of $\Gamma$, by taking $d$ lines.) Nonetheless, $\Gamma$ satisfies $\CB(r)$ by B\'{e}zout.
\end{example}

This example demonstrates the following. 

\begin{corollary} \label{cor:bound-is-tight}
The bound $|\Gamma| \leq (d+1)r+1$ in Conjecture \ref{conj:main} is tight.
\end{corollary}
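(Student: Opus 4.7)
The statement is essentially a direct consequence of Example \ref{ex:rational-normal-curve}, so my plan is to explicitly package that example into a proof that exhibits a family of counterexamples for $|\Gamma| = (d+1)r+2$. Two things need verification: (a) the $m = (d+1)r+2$ points on a rational normal curve $C \subset \mathbb{P}^{d+1}$ of degree $d+1$ satisfy $\CB(r)$, and (b) they do \emph{not} lie on any plane configuration of dimension $d$.

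For (a), I would argue by B\'{e}zout. Suppose a degree $r$ hypersurface $F$ contains $m - 1 = (d+1)r + 1$ of the points of $\Gamma$. The intersection $F \cap C$ has degree $r \cdot (d+1) = (d+1)r$, so unless $C \subseteq F$, the number of points of $C$ contained in $F$ is at most $(d+1)r < m-1$, a contradiction. Hence $C \subseteq F$, so $F$ vanishes on all of $\Gamma$. This shows $\Gamma$ is $\CB(r)$.

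For (b), I would use the standard fact that any $d+2$ points on a rational normal curve of degree $d+1$ are in general linear position, so a linear subspace $P_i \subseteq \mathbb{P}^{d+1}$ of dimension $e_i$ meets $C$ in at most $e_i + 1$ points. Given a plane configuration $\mc{P} = P_1 \cup \cdots \cup P_k$ of dimension $d$ (so $\sum e_i = d$ with each $e_i \geq 1$, hence $k \leq d$), we get
\[ |\Gamma \cap \mc{P}| \;\leq\; \sum_{i=1}^k (e_i + 1) \;=\; d + k \;\leq\; 2d. \]
Since for $r \geq 2$ we have $m = (d+1)r + 2 > 2d$, not all of $\Gamma$ can lie on $\mc{P}$.

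There is no real obstacle here: the content is entirely contained in the example, and the corollary is a formal restatement. The only subtle point to be careful about is the hypothesis $r \geq 2$ in the example (so that $m > 2d$ comfortably); for $r = 1$ the Cayley--Bacharach condition on points in $\mathbb{P}^n$ is trivial to analyze separately, and the bound $(d+1) \cdot 1 + 1 = d+2$ is also easily seen to be tight by taking $d+2$ general points.
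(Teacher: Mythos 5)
Your proposal is correct and takes essentially the same route as the paper: the paper's ``proof'' of the corollary is simply Example \ref{ex:rational-normal-curve}, and your parts (a) and (b) are exactly the B\'ezout argument and the general-position count (at most $e_i+1$ points of the curve on an $e_i$-plane, so at most $d+k \leq 2d$ points on a dimension-$d$ configuration) implicit in that example. One caution about your closing aside on $r=1$: taking $d+2$ general points says nothing about tightness (a counterexample would need $d+3$ points), and in fact for $r=1$ and $d \geq 3$ any $d+3$ points lie on $\lceil \tfrac{1}{2}(d+3)\rceil \leq d$ lines, so the bound is \emph{not} tight for those individual pairs $(r,d)$; this does not affect the corollary, since the $r \geq 2$ examples already show the bound $(d+1)r+1$ cannot be raised.
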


We next see that, under the hypotheses of the conjecture, the points need not lie on a degree-$d$ rational curve. This is in contrast to the case $d=1$ (where the points will lie on a line) and the result of \cite{SU} (where $d$ is effectively $1.5$ and the points lie on a degree two, possibly reducible, rational curve).

\begin{example}[Elliptic curve] \label{ex:elliptic-curve}
Let $r=2$ and $d=3$. In $\mb{P}^3$, let $\Gamma$ consist of $9 = (3+1)2+1$ general points on a quartic elliptic curve $E$. Then no $8$ points of $\Gamma$ are a complete intersection of $E$ with a quadric surface (such complete intersections form only a $7$-dimensional family), so $\Gamma$ is $\CB(2)$. The unique $3$-dimensional plane configuration containing $\Gamma$ is the $\mb{P}^3$. Moreover, $\Gamma$ does not lie on any union of rational curves of total degree $3$. (See Question \ref{q:curves} for more discussion about points on possibly-reducible curves.)
\end{example}

Plane configurations of length greater than one are unavoidable:

\begin{example}[Two 2-planes]
Let $d=4$ and $r=3$. Let $\mc{P}$ be a split configuration of two 2-planes $P_1, P_2$. Let $C_i$ be a smooth conic on $P_i$ ($i=1, 2$) and let $\Gamma$ consist of $8$ general points from each $C_i$. Then $\Gamma$ is $\CB(3)$ and has $(d+1)r+1 = 16$ points. Moreover, $\mc{P}$ is the unique plane configuration of dimension $4$ containing $\Gamma$.
\end{example}

\begin{example}[$d$ general lines] \label{ex:d-lines}
Suppose $r \geq 2d-1$. Let $\mc{P}$ consist of $d$ general lines in $\mb{P}^{n}$, where $n > d > \frac{1}{2}n$. Let $\Gamma$ consist of $r+2$ general points from each line. Then $\Gamma$ is $\CB(r)$ by B\'{e}zout and $|\Gamma| = d(r+2) \leq (d+1)r+1$. The only dimension-$d$ plane configuration containing $\Gamma$ is the $d$ lines.
\end{example}

Notably, this example shows that plane configurations of arbitrary length arise in Conjecture \ref{conj:main} for sufficiently large $r$. See also Corollary \ref{cor:d-lines-bound}, which shows that skew configurations of maximum length $d$ arise only outside the range $1 \leq \frac{r}{d-1} \leq 2$.

\section{Key lemmas} \label{sec:key-lemmas}

We have seen that complements of $\Gamma$ by hyperplanes must satisfy lower-degree Cayley--Bacharach conditions. These conditions seem too weak to allow inductive approaches to Conjecture \ref{conj:main}. Our core approach is to establish that, under certain conditions, subsets of $\Gamma$ again satisfy $\CB(r)$, with the same value of $r$.

\subsection{Split configurations}

\begin{lemma} \label{lem:disjoint-planes}
Let $\Gamma \subset \mathbb{P}^n$ be a finite set of points. Let $P_1, P_2$ be disjoint, positive-dimensional planes, and suppose $\Gamma \subset P_1 \cup P_2$.

Let $r \geq 1$. Then $\Gamma$ is $\CB(r)$ if and only if $\Gamma \cap P_1$ and $\Gamma \cap P_2$ are both $\CB(r)$.
\end{lemma}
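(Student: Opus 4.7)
The plan is to handle each direction by passing polynomials between $\mathbb{P}^n$ and the individual planes $P_1, P_2$, using $P_1 \cap P_2 = \emptyset$ as the key geometric input.

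For the ``if'' direction, suppose both $\Gamma \cap P_1$ and $\Gamma \cap P_2$ satisfy $\CB(r)$, and let $\tilde F$ be a degree $r$ form on $\mathbb{P}^n$ vanishing on $\Gamma \setminus \{x\}$. Without loss of generality $x \in P_1$. Restricting $\tilde F$ to $P_1$ gives a degree $r$ form that vanishes on $(\Gamma \cap P_1) \setminus \{x\}$, so $\CB(r)$ for $\Gamma \cap P_1$ immediately forces $\tilde F(x) = 0$.

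For the ``only if'' direction, suppose $\Gamma$ is $\CB(r)$, fix $x \in \Gamma \cap P_1$ (the case $x \in P_2$ is symmetric), and let $F$ be a degree $r$ form vanishing on $(\Gamma \cap P_1) \setminus \{x\}$. The key step is to extend $F$ to a degree $r$ form $\tilde F$ on $\mathbb{P}^n$ that additionally vanishes on all of $P_2$. To do this, I would use disjointness of $P_1$ and $P_2$ to construct a linear projection $\pi \colon \mathbb{P}^n \setminus P_2 \dashrightarrow P_1$ with center $P_2$, given by linear forms $L_0, \ldots, L_{d_1}$ vanishing on $P_2$ and restricting to coordinates on $P_1$; then set $\tilde F := F(L_0, \ldots, L_{d_1})$. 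This $\tilde F$ is a homogeneous degree $r$ form, it restricts to $F$ on $P_1$, and it vanishes on $P_2$ since each $L_i$ does and $r \geq 1$. Thus $\tilde F$ vanishes on $\Gamma \setminus \{x\}$, and $\CB(r)$ for $\Gamma$ forces $\tilde F(x) = F(x) = 0$.

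The main technical point is the construction of $\pi$ and the verification that $\tilde F|_{P_2} \equiv 0$. Both follow from an explicit coordinate choice splitting $\mathrm{span}(P_1, P_2)$ into the two planes, after which $L_0, \ldots, L_{d_1}$ can be taken to be the coordinates cutting out $P_2$ inside the span (extended arbitrarily to $\mathbb{P}^n$). The hypothesis $r \geq 1$ is used precisely to ensure $F(L_0, \ldots, L_{d_1})$ has no constant term in these variables, so that it vanishes identically on the center of projection. I do not anticipate any substantial obstacle beyond this bookkeeping.
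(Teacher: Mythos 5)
Your proof is correct and is essentially the paper's argument: the paper chooses split coordinates $[X:Y]$ on $\mathrm{span}(P_1,P_2)$ with $P_1=\{Y=0\}$, $P_2=\{X=0\}$ and replaces $F$ by $G(X,Y)=F(X,0)$, which is exactly your pullback $F(L_0,\ldots,L_{d_1})$ under projection from $P_2$ (agreeing with $F$ on $P_1$ and vanishing identically on $P_2$ since $r\geq 1$), followed by the same appeal to $\CB(r)$ for $\Gamma$. The other direction, restricting a global form to $P_1$ and invoking $\CB(r)$ for $\Gamma\cap P_1$, likewise coincides with the paper's proof.
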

\begin{proof}
Let $p_i = \dim P_i$. First, we choose coordinates $[X_0 : \cdots : X_{p_1} : Y_0 : \cdots : Y_{p_2}]$ for $\mathrm{span}(P_1, P_2) \subseteq \mathbb{P}^n$ so that $P_1 = [X_0 : \cdots : X_{p_1} : 0 : \cdots : 0]$ and $P_2 = [0 : \cdots : 0 : Y_0 : \cdots Y_{p_2}]$. Write $\Gamma = \Gamma_1 \cup \Gamma_2$ where $\Gamma_i = \Gamma \cap P_i$.

$(\Rightarrow)$ Suppose $\Gamma$ satisfies $\CB(r)$. Let $F$ be a polynomial of degree $r$ vanishing at all but possibly one point of $\Gamma_1$. We claim that $F$ vanishes at the last point. Let $G(X, Y) = F(X, 0)$, that is, the polynomial obtained by deleting all monomials involving $Y_j$'s. We may assume $G$ is not identically zero. Now $G|_{P_1} = F|_{P_1}$, and $G$ vanishes identically on $P_2$, hence on all of $\Gamma_2$. By $\CB(r)$ for $\Gamma$, $G$ vanishes at the last point of $\Gamma_1$, so $F$ does also. \medskip

$(\Leftarrow)$ Suppose $\Gamma_1$ and $\Gamma_2$ satisfy $\CB(r)$. Let $F$ be a degree-$r$ polynomial vanishing at all but possibly one point of $\Gamma$, say $p \in \Gamma_1$. We may assume $F|_{P_1}$ is not identically zero, so by $\CB(r)$ for $\Gamma_1$, $F$ vanishes at the last point.
\end{proof}

We will primarily use this for pairs of disjoint planes, but the same argument applies to split configurations:

\begin{proposition}
Let $\mc{P} = \bigcup P_i$ be a \emph{split} plane configuration and let $\Gamma \subseteq \mc{P}$ be a finite set. Let $r \geq 1$. Then $\Gamma$ is $\CB(r)$ if and only if, for each $i$, $\Gamma \cap P_i$ is $\CB(r)$.
\end{proposition}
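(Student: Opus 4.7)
The plan is to reduce to Lemma \ref{lem:disjoint-planes} by induction on $\ell = \ell(\mc{P})$. The base case $\ell = 1$ is trivial, and the case $\ell = 2$ is exactly Lemma \ref{lem:disjoint-planes}, since for two planes ``split'' and ``disjoint'' coincide.

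For the inductive step with $\ell \geq 2$, I would use splitness to partition $\mc{P}$ into two disjoint planes. Let $Q_1 = P_1$ and $Q_2 = \mathrm{span}(P_2, \ldots, P_\ell)$. By the definition of split, $P_1 \cap \mathrm{span}(P_j : j \neq 1) = \emptyset$, so $Q_1$ and $Q_2$ are disjoint positive-dimensional linear spaces containing $\Gamma$. Applying Lemma \ref{lem:disjoint-planes} to $Q_1, Q_2$, we see that $\Gamma$ is $\CB(r)$ if and only if both $\Gamma \cap Q_1$ and $\Gamma \cap Q_2$ are $\CB(r)$. Moreover, $\Gamma \cap Q_1 = \Gamma \cap P_1$ because every other $P_i$ (for $i \geq 2$) lies inside $Q_2$, which is disjoint from $Q_1 = P_1$, and hence contributes no extra points.

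Next, I would check that $\mc{P}' = P_2 \cup \cdots \cup P_\ell$ is itself a split plane configuration inside $Q_2$: for each $i \geq 2$, we have $P_i \cap \mathrm{span}(P_j : j \geq 2,\, j \neq i) \subseteq P_i \cap \mathrm{span}(P_j : j \neq i) = \emptyset$, where the last equality uses that $\mc{P}$ is split. Thus the inductive hypothesis applies to $\mc{P}'$ and the finite set $\Gamma \cap Q_2 = \bigsqcup_{i \geq 2} (\Gamma \cap P_i)$, giving that $\Gamma \cap Q_2$ is $\CB(r)$ if and only if each $\Gamma \cap P_i$ (for $i \geq 2$) is $\CB(r)$. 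Combining the two equivalences yields the proposition.

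There is no serious obstacle here; the only thing to be careful about is that ``split'' is genuinely inherited under the grouping $(P_1) \mid (P_2, \ldots, P_\ell)$ and that the pieces on each side of the split remain split in their respective spans. Both facts are immediate from the characterization $\dim\mathrm{span}(\mc{P}) = \dim(\mc{P}) + \ell(\mc{P}) - 1$, which behaves additively under such a partition.
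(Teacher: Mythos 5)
Your proof is correct, but it takes a different route from the paper's. The paper gives no separate argument for this proposition: it simply notes that the coordinate-splitting proof of Lemma \ref{lem:disjoint-planes} applies verbatim to a split configuration --- choose coordinates on $\mathrm{span}(\mc{P})$ adapted to the direct sum decomposition and, in the forward direction, set all variable blocks other than the one belonging to $P_i$ equal to zero, producing a degree-$r$ form that agrees with $F$ on $P_i$ and vanishes identically on every other plane, hence on the rest of $\Gamma$. You instead treat Lemma \ref{lem:disjoint-planes} as a black box and induct on $\ell(\mc{P})$, grouping the configuration as $P_1$ versus $Q_2 = \mathrm{span}(P_2, \ldots, P_\ell)$. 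The two points you flag are exactly the ones needing verification, and your checks are right: the split condition in the pointwise form $P_i \cap \mathrm{span}(P_j : j \ne i) = \emptyset$ gives both $P_1 \cap Q_2 = \emptyset$ (so the lemma applies and $\Gamma \cap Q_2 = \bigcup_{i \ge 2} (\Gamma \cap P_i)$, since $\Gamma \subseteq \mc{P}$) and the splitness of the sub-configuration $P_2 \cup \cdots \cup P_\ell$; that pointwise characterization is the cleaner justification, and the dimension-additivity remark at the end is not actually needed. As for what each approach buys: the paper's direct argument handles all blocks at once and makes transparent why ``split'' (not merely skew) is the right hypothesis --- it is precisely what lets one modify a form on one plane while forcing it to vanish on all the others --- whereas your induction never redoes the coordinate computation and only ever invokes the two-plane case, at the modest cost of verifying that splitness is inherited under grouping and under passing to sub-configurations.
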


\begin{proposition} \label{prop:disjoint planes case}
Suppose Conjecture \ref{conj:main} holds up to $d-1$ for fixed $r$. Let $|\Gamma| \leq (d+1)r+1$. Let $A, B \subseteq \mathbb{P}^n$ be disjoint, positive-dimensional planes. Suppose $\Gamma \subset A \cup B$ and $\Gamma \cap A$ and $\Gamma \cap B$ are nonempty. Then $\Gamma$ lies on a plane configuration of dimension $d$.
\end{proposition}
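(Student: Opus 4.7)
The plan is to split $\Gamma$ across the two disjoint planes and apply the inductive hypothesis to each half independently. Write $\Gamma_A := \Gamma \cap A$ and $\Gamma_B := \Gamma \cap B$. Since $A$ and $B$ are disjoint and positive-dimensional, the ``only if'' direction of Lemma~\ref{lem:disjoint-planes} guarantees that $\Gamma_A$ and $\Gamma_B$ each satisfy $\CB(r)$ on their own. Both are nonempty by assumption, so Proposition~\ref{prop:min-count} yields $|\Gamma_A|, |\Gamma_B| \geq r+2$.

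Next, let $d_A$ be the smallest non-negative integer with $|\Gamma_A| \leq (d_A+1)r+1$, and define $d_B$ analogously. The lower bound $|\Gamma_A| \geq r+2$ forces $d_A \geq 1$, and on the other hand
\[ |\Gamma_A| \;\leq\; |\Gamma| - |\Gamma_B| \;\leq\; (d+1)r+1 - (r+2) \;=\; dr - 1, \]
so $d_A \leq d-1$, and similarly $d_B \leq d-1$. The inductive hypothesis (Conjecture~\ref{conj:main} in dimension at most $d-1$, for this fixed $r$) then applies to each half, producing plane configurations $\mc{P}_A \supseteq \Gamma_A$ of dimension $d_A$ and $\mc{P}_B \supseteq \Gamma_B$ of dimension $d_B$.

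It remains to check that $\mc{P}_A \cup \mc{P}_B$ has total dimension at most $d$. By minimality of $d_A$ we have $|\Gamma_A| \geq d_A r + 2$, and similarly $|\Gamma_B| \geq d_B r + 2$. Summing and comparing with $|\Gamma| \leq (d+1)r+1$ yields
\[ (d + 1 - d_A - d_B)\, r \;\geq\; 3, \]
which forces the integer $d_A + d_B$ to be at most $d$ for every $r \geq 1$ (smaller $r$ only gives extra slack). Therefore $\mc{P}_A \cup \mc{P}_B$ is a plane configuration of dimension at most $d$ containing $\Gamma$, and if one insists on dimension exactly $d$ it can be enlarged by adjoining a line disjoint from $\Gamma$.

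No single step here looks delicate: the conceptual content is packaged into Lemma~\ref{lem:disjoint-planes} (using disjointness crucially to propagate $\CB(r)$ to each piece), and the only nontrivial bookkeeping is the final inequality $d_A + d_B \leq d$, which survives because each piece is forced by the $\CB(r)$ lower bound to consume a full ``$r+2$ worth'' of points.
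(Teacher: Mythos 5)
Your proof is correct and takes essentially the same route as the paper's: split $\Gamma$ into $\Gamma_A$ and $\Gamma_B$, use Lemma \ref{lem:disjoint-planes} to see each piece is $\CB(r)$, apply the conjectural cases up to $d-1$ to the pieces, and combine the resulting configurations after a counting argument based on the lower bound of roughly $r+2$ points per piece. The only difference is bookkeeping: you define $d_A, d_B$ numerically (smallest integers with $|\Gamma_A| \leq (d_A+1)r+1$, etc.) and treat the two sides symmetrically, which folds the paper's appeal to the geometric minimality of $d_A$ and Proposition \ref{prop:inductive-lower-bound} into a purely arithmetic observation, while the paper instead works asymmetrically with the smaller piece first.
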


\begin{proof}
Let $\Gamma_A = \Gamma \cap A$ and $\Gamma_B = \Gamma \cap B$. By counting, either 
\[|\Gamma_A| \leq dr+1 \text{ or } |\Gamma_B| \leq dr+1.\]
Without loss of generality, assume $|\Gamma_A| \leq dr+1$. Let $d_A$ be minimal such that $\Gamma_A$ lies on a plane configuration of dimension $d_A$. By Proposition \ref{lem:disjoint-planes}, $\Gamma_A$ is $\CB(r)$, so the earlier cases of the Conjecture imply $1 \leq d_A \leq d-1$. The inductive lower bound (Proposition \ref{prop:inductive-lower-bound}) implies $|\Gamma_A| \geq d_Ar+2$ by minimality of $d_A$, and so
\[|\Gamma_B| = |\Gamma| - |\Gamma_A| \leq (d-d_A+1)r - 1 < (d-d_A+1)r+1,\]
so $\Gamma_B$ lies on a plane configuration of dimension $d-d_A$. Combining these two plane configurations gives a plane configuration of dimension $d$ as desired.
\end{proof}

\subsection{Unions of two planes meeting at a point}
Let $A, B \subset \mathbb{P}^n$ be linear spaces meeting at a point $p$. Without loss of generality, assume $A$ and $B$ span $\mathbb{P}^n$. We may choose coordinates of the form $[X : \cdots Y_i \cdots : \cdots Z_j \cdots],$ where $p = [1 : 0 :\cdots : 0]$, $A = [X : \cdots Y_i \cdots : 0 \cdots 0]$ and $B = [X : 0 \cdots 0 : \cdots Z_j \cdots]$.
\begin{lemma} \label{lem:point-extension}
Let $F_A, F_B$ be polynomials of degree $r$ on $A$ and $B$, such that $F_A(p) = F_B(p)$. Then there exists a polynomial $F$ on $\mathbb{P}^n$ such that $F|_A = F_A$ and $F|_B = F_B$.
\end{lemma}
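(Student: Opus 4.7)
The plan is to construct $F$ explicitly in the given coordinates. Write $F_A$ as a homogeneous polynomial of degree $r$ in the variables $X, Y_1, \ldots, Y_{\dim A}$ and $F_B$ as a homogeneous polynomial of degree $r$ in $X, Z_1, \ldots, Z_{\dim B}$. The natural candidate is to add them, with a correction term that eliminates the double-counting along the intersection $A \cap B = \{p\}$.

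The key observation is that the restriction of $F_A$ to $A \cap B$ — that is, the polynomial obtained from $F_A(X, Y_1, \ldots, Y_{\dim A})$ by setting all $Y_i = 0$ — is itself homogeneous of degree $r$ in the single variable $X$, hence equals $c_A X^r$ for some scalar $c_A$. Similarly, $F_B(X, 0, \ldots, 0) = c_B X^r$. Evaluating at $p = [1 : 0 : \cdots : 0]$ gives $c_A = F_A(p) = F_B(p) = c_B$; call this common value $c$.

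I then define
\[
F(X, Y, Z) := F_A(X, Y_1, \ldots, Y_{\dim A}) + F_B(X, Z_1, \ldots, Z_{\dim B}) - c X^r,
\]
which is a homogeneous polynomial of degree $r$ on $\mathbb{P}^n$. Restricting to $A$ sets all $Z_j = 0$, so $F_B(X, 0, \ldots, 0) = cX^r$ cancels the correction and leaves $F_A$; similarly restricting to $B$ sets all $Y_i = 0$ and leaves $F_B$. This gives exactly the desired extension.

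There is no real obstacle here: the lemma is essentially a linear-algebra bookkeeping statement, and its content is that the only monomial common to the coordinate rings of $A$ and $B$ in degree $r$ is $X^r$, so a single scalar matching condition (namely equality at $p$) is exactly what is needed for an extension to exist. The one subtlety to flag is that we must work in the span of $A$ and $B$ (which we have assumed equals $\mathbb{P}^n$), since otherwise the coordinate decomposition above is not global; this reduction was already noted in the setup preceding the lemma.
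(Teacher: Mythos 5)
Your construction $F = F_A + F_B - cX^r$ is exactly the paper's own formula (the paper writes it as $cX^r + (F_A - cX^r) + (F_B - cX^r)$), and your verification that restricting to $A$ or $B$ recovers $F_A$ or $F_B$ matches the paper's argument. The proof is correct and takes essentially the same approach.
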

\begin{proof}
Restricting to $p$ means setting all $Y_i$ and $Z_j$ to $0$, so $F_A(p) = F_B(p)$ means the coefficient $c$ of $X^r$ is the same for both. Then put
\[F(X, Y, Z) = c X^r + (F_A(X, Y, 0) - c X^r) + (F_B(X, 0, Z) - c X^r),\]
where by $Y$ and $Z$ we mean the corresponding tuples of variables $Y_i$ and $Z_j$. Setting all $Z_j = 0$ recovers $F_A$ and setting all $Y_i = 0$ recovers $F_B$.
\end{proof}

Now let $\Gamma \subset A \cup B$ be a finite set. Let $\Gamma_A = (\Gamma \cap A) \setminus \{p\}$ and $\Gamma_B = (\Gamma \cap B) \setminus \{p\}$.
\begin{lemma}
\label{lem:union-of-two-at-point}
Suppose $\Gamma$ satisfies $\CB(r)$.
\begin{itemize}
    \item[1.] At least one of $\Gamma_A$ and $\Gamma_A \cup \{p\}$ is $\CB(r)$.
    \item[2.] If $p \notin \Gamma$, at least one of $\Gamma_A$ and $\Gamma_B \cup \{p\}$ is $\CB(r)$.
    \item[3.] If $p \in \Gamma$, at least one of $\Gamma_A \cup \{p\}$ and $\Gamma_B \cup \{p\}$ is $\CB(r)$.
\end{itemize}
Note that we may exchange $A$ and $B$ in statements 1 and 2.
\end{lemma}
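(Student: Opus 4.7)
The plan is to prove each part by contradiction, using Lemma \ref{lem:point-extension} to produce a degree-$r$ polynomial on $\mathbb{P}^n$ that violates $\CB(r)$ for $\Gamma$. The central workhorse will be an ``extend by zero'' move: given a degree-$r$ polynomial $F_A$ on $A$ with $F_A(p) = 0$, I can take $F_B \equiv 0$ in Lemma \ref{lem:point-extension} to obtain an $F$ on $\mathbb{P}^n$ that restricts to $F_A$ on $A$ and to $0$ on $B$, so $F$ vanishes on $\Gamma_B$ and at $p$ automatically.

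For statement (1), I would assume neither $\Gamma_A$ nor $\Gamma_A \cup \{p\}$ is $\CB(r)$ and aim for a contradiction. Choose witnesses: $F_A$ of degree $r$ on $A$ vanishing on $\Gamma_A \setminus \{q_A\}$ with $F_A(q_A) \ne 0$, and $G_A$ of degree $r$ on $A$ vanishing on $(\Gamma_A \cup \{p\}) \setminus \{x\}$ with $G_A(x) \ne 0$. If $x \ne p$, then $G_A(p) = 0$ and extending $G_A$ by zero on $B$ produces a polynomial vanishing on $\Gamma \setminus \{x\}$ but not at $x$, contradicting $\CB(r)$ for $\Gamma$. Otherwise $x = p$, so $G_A$ vanishes on all of $\Gamma_A$ while $G_A(p) \ne 0$. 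If $F_A(p) = 0$, extending $F_A$ by zero already finishes. If $F_A(p) \ne 0$, I would rescale so that $F_A(p) = G_A(p)$ and replace $F_A$ by $H_A := G_A - F_A$: this still has degree $r$, vanishes at $p$ and on $\Gamma_A \setminus \{q_A\}$, and satisfies $H_A(q_A) = -F_A(q_A) \ne 0$, so extending $H_A$ by zero on $B$ gives the required contradiction.

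Statements (2) and (3) follow the same template, using witnesses $F_A$ on $A$ and $G_B$ on $B$ simultaneously; the subtraction trick of (1) is then replaced by a direct appeal to Lemma \ref{lem:point-extension}. The extend-by-zero shortcut reduces both statements to the case that $F_A(p), G_B(p) \ne 0$. After rescaling so $F_A(p) = G_B(p)$, Lemma \ref{lem:point-extension} glues $F_A$ and $G_B$ into a degree-$r$ polynomial $F$ on $\mathbb{P}^n$ which vanishes on $(\Gamma_A \setminus \{q_A\}) \cup \Gamma_B$ in case (2), and on $\Gamma_A \cup \Gamma_B$ in case (3). Since $p \notin \Gamma$ in (2) and $p \in \Gamma$ in (3), the polynomial $F$ vanishes on $\Gamma \setminus \{q_A\}$ and on $\Gamma \setminus \{p\}$ respectively, each contradicting $\CB(r)$ for $\Gamma$.

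I expect the main obstacle to be the bookkeeping of which point each witness misses and which ambient set of points the glued polynomial vanishes on. Conceptually, the intersection point $p$ is never a genuine obstruction: Lemma \ref{lem:point-extension} permits independent prescriptions on $A$ and $B$ once the values at $p$ are made to agree, and the subtraction trick in (1) merely compensates for the absence of a second witness on $B$.
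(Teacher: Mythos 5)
Your proof is correct and follows essentially the same route as the paper: your extend-by-zero move is precisely the paper's key observation (extending a witness vanishing at $p$ by $F_B \equiv 0$ via Lemma \ref{lem:point-extension}), your subtraction trick in (1) is the paper's ``suitable linear combination'' step, and (2) and (3) reduce, exactly as in the paper, to both witnesses missing $p$ and then gluing them after rescaling.
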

Statement (1) is the most important as it guarantees information about the points on $A$ (and, analogously, the points on $B$). We can visualize the other relationships according to a graph:

\begin{center}
\begin{tabular}{ccc}
\xymatrix{
\Gamma_A \ar@{-}[r] \ar@{-}[dr] & \Gamma_A \cup \{p\} \ar@{-}[dl] \\
\Gamma_B \ar@{-}[r] & \Gamma_B \cup \{p\} \\
}     &&
\xymatrix{
\Gamma_A \ar@{-}[r] & \Gamma_A \cup \{p\} \ar@{-}[d] \\
\Gamma_B \ar@{-}[r] & \Gamma_B \cup \{p\} \\
}
\\
$(p \notin \Gamma)$     && $(p \in \Gamma)$
\end{tabular}
\end{center}
Each edge indicates ``at least one of these two sets must be $\CB(r)$''.

\begin{proof}
We repeatedly use the following observation: 
\begin{itemize}
\item[$(*)$] Suppose $F_A$ is a polynomial of degree $r$ vanishing at $p$ and at all but one point of $\Gamma_A$. Then extending by $F_B = 0$ via Lemma \ref{lem:point-extension} contradicts $\Gamma$ being $\CB(r)$. Thus no such $F_A$ exists.
\end{itemize}
Effectively, $(*)$ says $\Gamma_A \cup \{p\}$ is ``almost $\CB(r)$'': the only degree $r$ polynomials that can leave out a point of $\Gamma_A \cup \{p\}$ are those leaving out $p$ (and therefore vanishing at all of $\Gamma_A$). \medskip

1. Suppose $\Gamma_A$ is not $\CB(r)$. Then there exists $F_A$ leaving out one point $a \in \Gamma_A$. By observation $(*)$, $F_A(p) \ne 0$. Suppose further that $\Gamma_A \cup \{p\}$ is not $\CB(r)$. Then there exists $\tilde F_A$ leaving out one point of $\Gamma_A \cup \{p\}$. Again, $(*)$ implies $\tilde F_A(p) \ne 0$, and so $\tilde F_A$ vanishes on all of $\Gamma_A$. But then a suitable linear combination of $F_A$ and $\tilde F_A$ vanishes on $\Gamma_A \setminus \{a\}$ (since both $F_A$ and $\tilde F_A$ do) and at $\{p\}$ (by choice of combination) and does not vanish at $\{a\}$ (since $F_A$ does not and $\tilde F_A$ does). This contradicts $(*)$. \medskip

2. Suppose $\Gamma_A$ is not $\CB(r)$ and let $F_A$ leave out $a \in \Gamma_A$. By observation $(*)$, $F_A(p) \ne 0$. Suppose $\Gamma_B \cup \{p\}$ is also not $\CB(r)$ and let $F_B$ be a polynomial vanishing at all but one point of $\Gamma_B \cup \{p\}$. By $(*)$ applied with $A$ and $B$ exchanged, $F_B$ must also leave out $p$ and vanish on all of $\Gamma_B$. By Lemma \ref{lem:point-extension}, there exists a global polynomial $F$ on $\mathbb{P}^n$ extending $F_A$ and $F_B$ (up to scalar). Since $p \notin \Gamma$, this contradicts $\Gamma$ being $\CB(r)$. \medskip

3. Suppose there exist $F_A$ and $F_B$ leaving out points of $\Gamma_A \cup \{p\}$ and $\Gamma_B \cup \{p\}$, respectively. By $(*)$, the left-out point must be $p$ in both cases. By Lemma \ref{lem:point-extension}, $F_A$ and $F_B$ extend to a global polynomial on $\mathbb{P}^n$ vanishing on $\Gamma_A \cup \Gamma_B$. Since $p \in \Gamma$, this contradicts $\Gamma$ being $\CB(r)$.
\end{proof}

\begin{proposition}\label{planes meeting at a point}
Suppose the Conjecture holds up to $d-1$ for fixed $r$. Let $|\Gamma| \leq (d+1)r+1$. Let $A, B \subseteq \mb{P}^n$ be positive-dimensional linear spaces meeting at a point $p$. Suppose $\Gamma \subset A \cup B$ and $(\Gamma \cap A) \setminus \{p\}$ and $(\Gamma \cap B) \setminus \{p\}$ are nonempty. Then $\Gamma$ lies on a plane configuration of dimension $d$.
\end{proposition}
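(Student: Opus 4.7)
The plan is to mirror the proof of Proposition \ref{prop:disjoint planes case}: cover $\Gamma$ by two $\CB(r)$ subsets, one on each of $A$ and $B$ (possibly enlarged by $\{p\}$), small enough to be handled by the inductive hypothesis, then combine the resulting plane configurations.

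Write $\Gamma_A = (\Gamma \cap A) \setminus \{p\}$, $\Gamma_B = (\Gamma \cap B) \setminus \{p\}$, with sizes $a, b$; WLOG $a \leq b$. First, the $d = 1$ case is vacuous: Lemma \ref{lem:union-of-two-at-point}.(1) asserts that one of $\Gamma_A$ or $\Gamma_A \cup \{p\}$ is a nonempty $\CB(r)$ set, so by Proposition \ref{prop:min-count} has at least $r+2$ points, forcing $a \geq r+1$; symmetrically $b \geq r+1$, so $|\Gamma| \geq 2r+2$, contradicting $|\Gamma| \leq 2r+1$. Henceforth assume $d \geq 2$.

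Using Lemma \ref{lem:union-of-two-at-point}, I next select subsets $\Gamma_A' \in \{\Gamma_A, \Gamma_A \cup \{p\}\}$ and $\Gamma_B' \in \{\Gamma_B, \Gamma_B \cup \{p\}\}$, each $\CB(r)$, with $\Gamma_A' \cup \Gamma_B' \supseteq \Gamma$. When $p \notin \Gamma$, part (1) applied to each of $A$ and $B$ independently suffices (no need to cover $p$). When $p \in \Gamma$, part (3) forces $p$ into at least one of $\Gamma_A \cup \{p\}, \Gamma_B \cup \{p\}$ as a $\CB(r)$ set; I place $p$ there, then use part (1) on the other side (a brief case check: if part (3) gives only the $B$-side, then part (1) on $A$ still yields a $\CB(r)$ choice for $\Gamma_A'$). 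In every case $|\Gamma_A'| \leq a+1$ and $|\Gamma_B'| \leq b+1$. Now $a \leq b$ and $a + b \leq |\Gamma| \leq (d+1)r+1$ give $|\Gamma_A'| \leq a+1 \leq ((d+1)r+3)/2 \leq dr+1$ for $d \geq 2$ and $r \geq 1$. The inductive hypothesis then produces a plane configuration $\mc{P}_A$ of minimum dimension $d_A$, necessarily with $1 \leq d_A \leq d-1$ (the lower bound because $\Gamma_A'$ is a nonempty $\CB(r)$ set). By Proposition \ref{prop:inductive-lower-bound} applied with $d$ replaced by $d_A$, $|\Gamma_A'| \geq d_A r + 2$, whence $a \geq d_A r + 1$. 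Therefore $|\Gamma_B'| \leq b+1 \leq (d-d_A+1)r + 1$, and the inductive hypothesis yields a plane configuration $\mc{P}_B$ of dimension $\leq d - d_A$ containing $\Gamma_B'$. The union $\mc{P}_A \cup \mc{P}_B$ is then a plane configuration of dimension $\leq d$ covering $\Gamma$.

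The main obstacle is the combinatorial selection of $\Gamma_A', \Gamma_B'$: the edge graphs in Lemma \ref{lem:union-of-two-at-point} (four edges when $p \notin \Gamma$, three when $p \in \Gamma$) must be traversed so that $p$ is covered whenever $p \in \Gamma$, while preserving the bound $|\Gamma_A'| \leq a+1$. The numerical inequality $((d+1)r+3)/2 \leq dr+1$ is tight at $d = 2$, $r = 1$, which is precisely why the $d = 1$ case cannot be absorbed into the main argument and requires the separate vacuous treatment above.
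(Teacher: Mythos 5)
Your proof is correct and takes essentially the same route as the paper's: Lemma \ref{lem:union-of-two-at-point} supplies $\CB(r)$ subsets on each side of $p$ (with part (3) routing $p$ when $p \in \Gamma$), the smaller side is bounded by $dr+1$ so that the inductive hypothesis together with minimality gives $|\Gamma_A'| \geq d_A r + 2$, and the complementary count then places the other side on a configuration of dimension $d - d_A$. The differences are purely organizational: you choose both $\CB(r)$ subsets up front and treat $d=1$ as a separate vacuous case, whereas the paper fixes the side with at most $dr$ points and only invokes part (3) at the end to rule out the case where $p$ is left uncovered.
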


\begin{proof}
Let $p, \Gamma_A, \Gamma_B$ be as above. By counting, either $|\Gamma_A| \leq dr$ or $|\Gamma_B| \leq dr$. Without loss of generality, assume the first inequality holds.

Let $S$ be whichever of the sets $\Gamma_A$ or $\Gamma_A \cup \{p\}$ is $\CB(r)$, according to Lemma \ref{lem:union-of-two-at-point}(1). Let $d_1$ be minimal such that $S$ lies on a plane configuration of dimension $d_1$. Then $|S| \leq dr+1$, so the earlier cases of the conjecture imply $1 \leq d_1 \leq d-1$.

By minimality of $d_1$, it follows $|S| \geq d_1r + 2$. Then since $|\Gamma_B| + |S| \leq |\Gamma|$,
\[|\Gamma_B| + 1 \leq |\Gamma| - |S| + 1 \leq (d-d_1+1)r,\]
so whichever of the sets $\Gamma_B$ or $\Gamma_B \cup \{p\}$ satisfies $\CB(r)$ lies on a plane configuration of dimension $d-d_1$.

So we are done if the two plane configurations we have constructed cover $\Gamma$. The remaining case is where $p \in \Gamma$ but is not included in either configuration. That is, $\Gamma_A$ and $\Gamma_B$ are both $\CB(r)$, but neither $\Gamma_A \cup \{p\}$ nor $\Gamma_B \cup \{p\}$ are $\CB(r)$. This contradicts Lemma \ref{lem:union-of-two-at-point}(3).
\end{proof}

\subsection{Bounding the length of the configuration}

The main use of the following proposition is to prove Corollary \ref{cor:d-lines-bound}, which gives conditions on $r$ and $d$ under which $\Gamma$ lies on a plane configuration of maximal length, i.e. $d$ disjoint lines.

\begin{proposition} \label{prop:balancing}
Suppose $\Gamma$ is $\CB(r)$ and is contained in a skew plane configuration $\mathcal{P} = \bigcup P_i$, with $\Gamma \cap P_i$ nonempty for all $i$. Then one of the following holds:
\begin{itemize}
\item[(i)] Each plane $P_i$ contains at least $\max(\ell(\mc{P}), r+2)$ points of $\Gamma$, or
\item[(ii)] Some plane $P_i$ contains fewer than $\ell(\mathcal{P})$ points, and also $\ell(\mathcal{P}) \geq r+2$.
\end{itemize}
\end{proposition}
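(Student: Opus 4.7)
The plan is to establish a \emph{master inequality}: for every $j$,
\[ \max\bigl(\ell(\mc{P}),\,|\Gamma \cap P_j|\bigr) \geq r+2. \]
Writing $k = \ell(\mc{P})$ and $n_j = |\Gamma \cap P_j|$, the proposition will follow by a short case analysis. If $k \leq r+1$, the master inequality forces each $n_j \geq r+2 = \max(k, r+2)$, giving (i). If $k \geq r+2$, then $\max(k, r+2) = k$, and either every $n_j \geq k$ (giving (i)), or some $n_j < k$ holds alongside $k \geq r+2$ (giving (ii)).

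To prove the master inequality, I will fix $j$ and a point $x \in \Gamma \cap P_j$, and construct a polynomial $F$ of degree $\max(k-1,\,n_j-1)$ that vanishes on $\Gamma \setminus \{x\}$ but not at $x$; the $\CB(r)$ hypothesis then forces $\max(k-1, n_j-1) \geq r+1$, as required. The construction uses a \emph{piggybacking} strategy. First I injectively assign points of $(\Gamma \cap P_j) \setminus \{x\}$ to distinct planes $P_i$, $i \neq j$, at most one point per plane: if $n_j \leq k$ every point is assigned, and otherwise $n_j - k$ points are leftover. For each $i \neq j$, Proposition \ref{prop:extend-to-hyperplane} produces a hyperplane $H_i$ extending $\mathrm{span}(P_i, q_i)$ (when a point $q_i$ is assigned to $P_i$) or simply $P_i$ (otherwise). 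For each leftover $q$, Proposition \ref{prop:extend-to-hyperplane} similarly produces a hyperplane $H_q$ with $H_q \cap \Gamma = \{q\}$. Then $F$ is the product of all these hyperplanes, has degree $(k-1) + \max(0, n_j - k) = \max(k-1, n_j-1)$, and visibly vanishes on $\Gamma \setminus \{x\}$.

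The crucial verification, and the main obstacle, is that $x \notin H_i$ for each $i \neq j$; this is where the skewness hypothesis enters in an essential way. A short computation in affine cones shows that $P_i \cap P_j = \emptyset$ implies $\mathrm{span}(P_i, q_i) \cap P_j = \{q_i\}$: any element $v + \alpha q_i$ of the intersection (with $v$ lifted from $P_i$) must have $v = y - \alpha q_i$ lying in the cone over $P_j$, forcing $v = 0$ by disjointness. Since $x \in P_j$ with $x \neq q_i$, it follows that $x \notin \mathrm{span}(P_i, q_i)$, and Proposition \ref{prop:extend-to-hyperplane} yields $x \notin H_i$; the condition $x \notin H_q$ is immediate from $H_q \cap \Gamma = \{q\}$. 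Once this geometric input is in hand, the piggybacking strategy assembles $F$ without inadvertently introducing $x$ into any factor, and the master inequality follows.
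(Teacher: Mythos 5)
Your proof is correct and follows essentially the same approach as the paper: you cover $\Gamma \setminus \{x\}$ by hyperplanes obtained by spanning the other planes with points of the chosen plane (using skewness to keep $x$ out) plus individual hyperplanes for leftover points, and then invoke $\CB(r)$ to get a degree bound. The only difference is packaging -- you prove the inequality $\max(\ell(\mc{P}), |\Gamma\cap P_j|) \geq r+2$ uniformly for every plane and then do a combinatorial case split, whereas the paper applies the same construction only to the plane with fewest points and splits into the two cases directly.
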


\begin{proof}
Let $i$ be such that $\gamma_i = \#(\Gamma \cap P_i)$ is of minimal cardinality. Fix $x \in \Gamma \cap P_i$. We construct a collection of hyperplanes covering $\Gamma \setminus \{x\}$ as follows.

Observe that, given $j \ne i$ and a point $x' \in \Gamma \cap P_i \setminus \{x\}$, the plane $P' = \mathrm{span}(P_j, x')$ intersects $P_i$ only in $x'$, by the skewness assumption. We may extend it to a hyperplane $H'$ containing no additional points of $\Gamma \cap P_i$.

We first choose $\min(\gamma_i - 1, \ell(\mathcal{P}) - 1)$  hyperplanes $H'$ according to this observation, taking a different $j$ and a different $x' \in \Gamma \cap P_i \setminus \{x\}$ each time.

Suppose $\gamma_i \geq \ell(P)$. Then for each remaining point of $\Gamma \cap P_i \setminus \{x\}$, we take a hyperplane $H''$ containing that point and no other points of $\Gamma$, giving $\gamma_i - 1$ hyperplanes in all. The union of all these hyperplanes misses exactly the point $x \in \Gamma$. Since $\Gamma$ is $\CB(r)$, we see that $r < \gamma_i - 1$. By minimality of $\gamma_i$, we conclude that each plane in $\mc{P}$ contains at least $\max(r+2, \ell(\mc{P}))$ points. This gives case (i).

Suppose instead $\gamma_i < \ell(P)$. Then we instead extend the remaining $P_j$'s to hyperplanes $H''$ not containing $x$, giving a total of $\ell(\mc{P})-1$ hyperplanes covering $\Gamma \setminus \{x\}$. Then $r < \ell(\mc{P}) - 1$. This gives case (ii).
\end{proof}

\begin{corollary} \label{cor:d-lines-bound}
Let $\Gamma \subseteq \mb{P}^n$ be $\CB(r)$ and $|\Gamma| \leq (d+1)r+1$. Suppose the Conjecture holds, so $\Gamma$ lies on a plane configuration $\mc{P} = \bigcup P_i$ of dimension $d$, with $\Gamma \cap P_i$ nonempty for all $i$. By summing any intersecting planes of $\mc{P}$, assume $\mc{P}$ is skew.

Suppose $1 \leq \frac{r}{d-1} \leq 2$. Then the length of $\mc{P}$ is at most $d-1$.
\end{corollary}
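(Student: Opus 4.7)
My plan is to argue by contradiction. Since $\dim \mc{P} = d$ and every $P_i$ is positive-dimensional, we always have $\ell(\mc{P}) \leq d$, so it suffices to rule out the extremal case $\ell(\mc{P}) = d$. In that case, each $P_i$ must have dimension exactly $1$, and since $\mc{P}$ is assumed skew, $\mc{P}$ is a disjoint union of $d$ lines.

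Next, I would apply Proposition \ref{prop:balancing} to this configuration, and derive a contradiction from each of its two alternatives using the hypothesis $d-1 \leq r \leq 2(d-1)$. Alternative (ii) immediately gives $\ell(\mc{P}) = d \geq r+2$, i.e.\ $r \leq d-2$, which contradicts $r \geq d-1$. Alternative (i) says that every line of $\mc{P}$ contains at least $\max(d, r+2)$ points of $\Gamma$; since $r \geq d-1$ forces $r+2 \geq d+1 > d$, this maximum equals $r+2$, so summing over the $d$ lines gives $|\Gamma| \geq d(r+2) = dr + 2d$. Combining with the hypothesis $|\Gamma| \leq (d+1)r+1 = dr + r + 1$ yields $2d \leq r+1$, i.e.\ $r \geq 2d-1$, contradicting $r \leq 2(d-1)$.

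I do not anticipate any real obstacle: once the observation that $\ell(\mc{P}) = d$ forces $\mc{P}$ to be $d$ disjoint lines is in hand, everything reduces to the dichotomy of Proposition \ref{prop:balancing}. The symmetric interplay is what explains the hypothesis $1 \leq \frac{r}{d-1} \leq 2$: the lower inequality rules out alternative (ii), while the upper inequality rules out alternative (i), so both bounds are genuinely used.
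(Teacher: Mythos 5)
Your proposal is correct and follows essentially the same route as the paper: reduce to the extremal case of $d$ skew lines, then use the dichotomy of Proposition \ref{prop:balancing}, with $r \geq d-1$ ruling out alternative (ii) and the count $|\Gamma| \geq d(r+2)$ versus $|\Gamma| \leq (d+1)r+1$ forcing $r \geq 2d-1$, contradicting $r \leq 2(d-1)$. The only difference is that you spell out the roles of the two inequalities more explicitly than the paper does.
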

\begin{proof}
Suppose instead that $\ell(\mc{P}) = d$, that is, $\Gamma$ lies on $d$ skew lines.

We are not in case (ii) of Proposition \ref{prop:balancing} since $d < r+2$. But then we must be in case (i), so each line contains at least $r+2$ points of $\Gamma$. This gives $|\Gamma| \geq d(r+2)$. Combining with $|\Gamma| \leq (d+1)r+1$, we see that $r \geq 2d-1$, a contradiction.
\end{proof}

\section{Proof of main theorem} \label{sec:proof-main}

In this section, we prove Theorem \ref{thm:main}. Throughout, $\Gamma \subseteq \mb{P}^n$ denotes a finite set satisfying $\CB(r)$, and such that $|\Gamma| \leq (d+1)r+1.$ We wish to show that $\Gamma$ lies on a plane configuration $\mc{P}$ of dimension $d$ and with length $\ell(\mc{P})$ bounded as in the statement of the theorem. Note that the case $r=0$ is obvious and the case $d=0$ is precisely Proposition \ref{prop:min-count}.

\begin{remark}[Setup] \label{rmk:setup}
Most of the proofs below begin with the following setup and notation. We let $\alpha \geq d+1$ be the maximum number of points of $\Gamma$ lying on a $d$-plane. Let $A$ be such a $d$-plane and let $\Gamma_A = \Gamma \cap A$. We denote the complementary set of points as $\Gamma_B := \Gamma \setminus \Gamma_A$, we let $\beta = |\Gamma_B|$ and we let $B$ be the linear span of $\Gamma_B$. We may assume $\beta > 0$ since otherwise $\Gamma$ lies on $A$. \qed
\end{remark}

\subsection{Fixed $r$, varying $d$}

The case $r=1$ holds essentially by definition:

\begin{proposition}\label{r=1}
If $\Gamma$ is $\CB(1)$ and $|\Gamma| \leq d+2$, then $\Gamma$ lies on a $d$-plane.
\end{proposition}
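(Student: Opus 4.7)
The plan is to argue by contradiction. Suppose $\Gamma \subseteq \mathbb{P}^n$ satisfies $\CB(1)$, $|\Gamma| \leq d+2$, and $\Gamma$ does \emph{not} lie on any $d$-plane. First I would reduce to the case $|\Gamma| = d+2$: if $|\Gamma| \leq d+1$, the projective span of $\Gamma$ has dimension at most $d$, already placing $\Gamma$ on a $d$-plane and contradicting the assumption. Thus $|\Gamma| = d+2$, and since $\mathrm{span}(\Gamma)$ has dimension at least $d+1$ (by assumption) but also at most $|\Gamma|-1 = d+1$, its dimension is exactly $d+1$.

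The key observation is then a simple linear-algebra fact: any $d+2$ points of $\mathbb{P}^n$ whose projective span has dimension $d+1$ must be in linearly general position. Indeed, lifting to the affine cone, the $d+2$ vectors representing them span a $(d+2)$-dimensional subspace, so they are automatically linearly independent. Therefore, for any chosen $x \in \Gamma$, the remaining $d+1$ points are linearly independent in the cone and span a $d$-plane $V' \subset \mathrm{span}(\Gamma)$ that does \emph{not} contain $x$.

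To finish, I would invoke Proposition \ref{prop:extend-to-hyperplane} to extend $V'$ to a hyperplane $H \subset \mathbb{P}^n$ meeting $\Gamma$ only in $\Gamma \cap V' = \Gamma \setminus \{x\}$; in particular $x \notin H$. Then $H$ is a degree-$1$ hypersurface vanishing at every point of $\Gamma$ except $x$, contradicting $\CB(1)$. I do not expect any real obstacle here: the proposition is essentially a tautology, amounting to the observation that the $\CB(1)$ condition forbids any hyperplane from missing a single point of $\Gamma$, and when $|\Gamma| = d+2$ the only way to avoid producing such a missing-one-point hyperplane is for $\Gamma$ itself to lie on a $d$-plane.
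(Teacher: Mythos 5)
Your proposal is correct and takes essentially the same route as the paper: the entire content is that a $d$-plane containing all but one point of $\Gamma$, extended via Proposition \ref{prop:extend-to-hyperplane} to a hyperplane missing that point, would violate $\CB(1)$, so the span of $d+1$ of the points must already contain the last one. The paper phrases this directly (two cases on whether the first $d+1$ points span a $d$-plane) while you argue by contradiction through the linear independence of the $d+2$ cone vectors, but the dimension count and the use of $\CB(1)$ are identical.
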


\begin{proof}
If the first $d+1$ points don't span a $d$-plane, add the last point and we're done. If they \emph{do} span a $d$-plane, that plane contains the last point by $\CB(1)$.
\end{proof}

\begin{remark}For $r=1$ and $d > 1$, the weaker conclusion ``$\Gamma$ lies on a $d$-dimensional plane configuration (of arbitrary length)'' is vacuous and does not require $\CB(1)$ at all: any $d+2$ points trivially lie on a $(d{-}1)$-plane and a line -- indeed they lie on $\lceil \tfrac{1}{2}(d+2) \rceil$ lines.
\end{remark}

We next consider the case $r=2$. This is the final case in which the plane configuration always has length $1$.

\begin{proposition}\label{r=2}
If $\Gamma$ is $\CB(2)$ and $|\Gamma| \leq 2d+3$, then $\Gamma$ lies on a $d$-plane.
\end{proposition}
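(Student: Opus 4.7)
The plan is to proceed by induction on $d$. The base case $d = 0$ follows from Proposition \ref{prop:min-count}, which forces $\Gamma = \emptyset$ whenever $|\Gamma| \leq 3$, and $d = 1$ is the Bastianelli--Cortini--De Poi theorem cited in the introduction. For the inductive step, assume the result for all dimensions below $d$, let $\Gamma$ satisfy $\CB(2)$ with $|\Gamma| \leq 2d + 3$, and suppose for contradiction that $\Gamma$ does not lie on any $d$-plane.

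Following the setup of Remark \ref{rmk:setup}, let $A$ be a $d$-plane containing the maximum number $\alpha$ of points of $\Gamma$, and set $\Gamma_A = \Gamma \cap A$, $\Gamma_B = \Gamma \setminus \Gamma_A$, $\beta = |\Gamma_B|$, and $B = \operatorname{span}(\Gamma_B)$. Excising the hyperplane extending $A$ (Proposition \ref{prop:complements}, using Proposition \ref{prop:extend-to-hyperplane}) shows $\Gamma_B$ is $\CB(1)$, so by Proposition \ref{r=1}, $\Gamma_B$ lies on a $(\beta - 2)$-plane, and by Proposition \ref{prop:min-count} $\beta \geq 3$. Extending this $(\beta - 2)$-plane through $d - \beta + 2$ points of $\Gamma_A$ to produce a $d$-plane with at least $d + 2$ points of $\Gamma$, together with the maximality of $\alpha$, forces $\alpha \geq d + 2$, and hence $\beta \leq d + 1$. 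A parallel extension argument shows $\Gamma_A$ must span all of $A$.

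The conclusion now follows from a case analysis on $A \cap B$. If $A \cap B = \emptyset$, Lemma \ref{lem:disjoint-planes} promotes $\Gamma_A$ to a $\CB(2)$ set of $\alpha \leq 2d$ points; by the inductive hypothesis it lies on a $(d - 1)$-plane, contradicting the spanning of $A$ by $\Gamma_A$. If $A \cap B \neq \emptyset$, pick two points $y_1, y_2 \in \Gamma_B$ and consider the line $L = \overline{y_1 y_2} \subseteq B$; since $\Gamma_B \cap A = \emptyset$, $L \cap A$ is at most a single point $p$. Applying Lemma \ref{lem:union-of-two-at-point} to a suitable sub-configuration of $\Gamma$ supported on $A \cup L$, the various sub-cases force one of $\Gamma_A \setminus \{p\}$, $\Gamma_A$, or $\Gamma_A \cup \{p\}$ to be $\CB(2)$. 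In each possibility, this set either has too few points (contradicting Proposition \ref{prop:min-count}) or is forced by the base case to lie on a line, again contradicting that $\Gamma_A$ spans the $d$-plane $A$.

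The main obstacle I anticipate is the positive-dimensional intersection case $\dim(A \cap B) \geq 1$: then $\Gamma \not\subseteq A \cup L$, and Lemma \ref{lem:union-of-two-at-point} does not apply to $\Gamma$ directly. Excising the other points of $\Gamma_B$ to reduce to the sub-configuration $\Gamma \cap (A \cup L)$ drops the effective $\CB$-degree from $2$ to $1$, weakening the lemma's conclusions to statements about already-known $\CB(1)$ sets. Handling this cleanly will require either iterating the line-reduction argument with varied choices of $y_1, y_2 \in \Gamma_B$, or a more refined argument that treats $A \cap B$ directly, in either case with careful bookkeeping of the $p \in \Gamma$ versus $p \notin \Gamma$ dichotomy and the interaction of each sub-case with the bounds on $\alpha$, $\beta$ and the spanning property of $\Gamma_A$.
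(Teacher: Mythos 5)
Your reductions are fine as far as they go: the bounds $\alpha \geq d+2$, $\beta \leq d+1$, the fact that $\Gamma_A$ spans $A$, and the disjoint case (Lemma \ref{lem:disjoint-planes} makes $\Gamma_A$ a $\CB(2)$ set of at most $2d \leq 2(d-1)+3$ points, so induction contradicts the spanning of $A$) all work. But the case $A \cap B \neq \emptyset$ is a genuine gap, not a bookkeeping issue, and you essentially concede this. Lemma \ref{lem:union-of-two-at-point} requires the set in question to itself be $\CB(r)$ and to be \emph{entirely} contained in the union of the two planes meeting at a point. The sub-configuration $\Gamma \cap (A \cup L)$ is not known to be $\CB(2)$ (the Cayley--Bacharach condition does not pass to subsets), and excising the remaining points of $\Gamma_B$ by hyperplanes degrades the condition to $\CB(1)$ at best; a $\CB(1)$ conclusion for roughly $2d$ points of $\Gamma_A$ only places them on a plane of dimension about $2d-2$, which is no constraint. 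Moreover, even in the subcase where $A \cap B$ is a single point $p$, so that Lemma \ref{lem:union-of-two-at-point} applies directly to $\Gamma \subset A \cup B$, statement (1) may return the set $(\Gamma \cap A)\setminus\{p\}$ with $p \in \Gamma$; that set lying on a $(d-1)$-plane is perfectly consistent with $\Gamma \cap A$ spanning $A$, so the intended contradiction does not materialize in that branch either. As written, your argument establishes the proposition only when $\mathrm{span}(\Gamma_B)$ is disjoint from $A$.

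For comparison, the paper's proof avoids any case analysis on $A \cap B$ and does not induct on $d$: for an arbitrary subset $\Gamma' \subseteq \Gamma_A$ of $d+1$ points, form $P = \mathrm{span}\bigl(\Gamma_B \cup (\Gamma_A \setminus \Gamma')\bigr)$, which has dimension at most $(\beta-2) + (\alpha - d - 1) \leq d$ because $\Gamma_B$ is $\CB(1)$ by excision. Either $\Gamma' \subset P$, in which case all of $\Gamma$ lies on the ($\leq d$)-dimensional plane $P$ and we are done, or $\Gamma \setminus P = \Gamma' \setminus P$ is $\CB(1)$ by excising $P$, which forces $\Gamma'$ to span at most a $(d-1)$-plane. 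Since this holds for every $(d+1)$-element subset of $\Gamma_A$, the set $\Gamma_A$ cannot span the $d$-plane $A$, contradicting maximality of $\alpha$ when $\beta > 0$. If you want to keep your inductive framework, replacing the line $L$ by this ``span $\Gamma_B$ together with all but $d+1$ points of $\Gamma_A$'' device is the natural repair; as it stands, the intersecting case is unproven.
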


\begin{proof}
Let $\alpha, \Gamma_A, A, \beta, \Gamma_B, B$ be as in Remark \ref{rmk:setup}. For contradiction, assume $\beta > 0$.

We reduce to the case where any $d+1$ points $\Gamma' \subseteq \Gamma_A$ lie on a $(d-1)$-plane. To see this, consider such a subset $\Gamma'$ and extend $B$ by the other $\alpha-d-1$ points of $\Gamma_A$. By excision $\Gamma_B$ is $\CB(1)$, so $\dim(B) \leq \beta-2$, so the resulting plane $P$ has dimension at most
\[(\beta-2) + (\alpha-d-1) \leq d.\]
If $\Gamma' \subset P$, then $\Gamma \subset P$ and the proof is complete. Otherwise, $\Gamma \setminus P = \Gamma' \setminus P$ is $\CB(1)$ by excision and so lies on a $(|\Gamma'\setminus P|-2)$-plane. Returning the points of $\Gamma' \cap P$ to the set, we see $\Gamma'$ spans a plane of dimension at most $|\Gamma'|-2 = d-1$, as desired.

Now, by maximality, $\Gamma_A$ must span the $d$-plane $A$. But then $\Gamma_A$ contains $d+1$ linearly independent points. This is a contradiction, so $\beta = 0$.
\end{proof}

Notice that as soon as $r\geq 3,$ we can no longer conclude that $\Gamma$ lies on a plane configuration of length one, as we see in the following example.

\begin{example}
Let $\Gamma$ be a set of $10 = (2+1)3+1$  points on two skew lines, five points on each line. Then $\Gamma$ satisfies $\CB(3)$ by B\'{e}zout, but the only plane configuration of dimension two that $\Gamma$ lies on has length two.
\end{example}

See Example \ref{ex:d-lines} for a generalization to $d$ skew lines, for sufficiently large $r$.

\subsection{Fixed $d$, varying $r$}

We prove the cases $d=1, 2, 3$. The case $d=1$ is known. We give a short proof using our approach.

\begin{theorem}[Case $d=1$; see \cite{BCD}]\label{thm:d=1}
If $\Gamma$ is $\CB(r)$ and $|\Gamma| \leq 2r+1$, then $\Gamma$ lies on a line.
\end{theorem}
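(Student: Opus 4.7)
My plan is to prove this by induction on $r$. The base case $r=1$ is immediate from Proposition \ref{r=1} applied with $d=1$ (in fact, even from the definition of $\CB(1)$, together with the trivial observation that any two points lie on a line).

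For the inductive step, I adopt the setup of Remark \ref{rmk:setup} with $d=1$: let $A$ be a line containing the maximum number $\alpha \geq 2$ of points of $\Gamma$, let $\Gamma_A = \Gamma \cap A$, let $\Gamma_B = \Gamma \setminus \Gamma_A$, and set $\beta = |\Gamma_B|$. Suppose toward contradiction that $\beta > 0$. Using Proposition \ref{prop:extend-to-hyperplane}, extend $A$ to a hyperplane $H$ meeting $\Gamma$ only in $\Gamma_A$. By the excision property (Proposition \ref{prop:complements}), $\Gamma_B = \Gamma \setminus H$ is $\CB(r-1)$.

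Now I compare two counts of $|\Gamma_B|$. Since $\alpha \geq 2$, we have $|\Gamma_B| \leq |\Gamma| - 2 \leq 2r - 1 = 2(r-1)+1$. By the inductive hypothesis applied to $\Gamma_B$ (which is $\CB(r-1)$), $\Gamma_B$ lies on a single line $L$. On the other hand, by the basic lower bound (Proposition \ref{prop:min-count}) applied to $\Gamma_B$, we have $|\Gamma_B| \geq (r-1) + 2 = r+1$, and therefore $\alpha = |\Gamma| - |\Gamma_B| \leq 2r+1 - (r+1) = r$.

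These two facts are incompatible with the maximality of $\alpha$: the line $L$ contains at least $|\Gamma_B| \geq r+1$ points of $\Gamma$, while by the choice of $A$ no line contains more than $\alpha \leq r$ points. This contradiction forces $\beta = 0$, so $\Gamma \subseteq A$ lies on a line. The whole argument is quite short; the only step requiring any thought is choosing to use a \emph{maximum-incidence} line for $A$ (rather than, say, the span of two arbitrary points), which is what makes the double count of $|\Gamma_B|$ close the loop — I do not anticipate any real obstacle beyond this.
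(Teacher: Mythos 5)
Your proof is correct and follows essentially the same route as the paper: induction on $r$, the maximal-incidence line $A$ of Remark \ref{rmk:setup}, excision to get $\Gamma_B$ is $\CB(r-1)$, the count $\beta \leq 2(r-1)+1$ forcing $\Gamma_B$ onto a line, and the lower bound $\beta \geq r+1$ clashing with the maximality of $\alpha$. The only cosmetic difference is how the final contradiction is phrased (you derive $\alpha \leq r < \beta$, while the paper notes $\beta \leq \alpha$ gives $|\Gamma| \geq 2r+2$); these are the same count rearranged.
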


\begin{proof}
Let $\alpha, \Gamma_A, A$ and $\beta, \Gamma_B, B$ be as in Remark \ref{rmk:setup}. Assume for contradiction that $\beta > 0$. By excision, $\Gamma_B$ is $\CB(r-1)$ and, by counting, $\beta \leq 2(r-1)+1$ (since  $\alpha \geq 2$). By induction on $r$, $\Gamma_B$ lies on a line.

By Proposition \ref{prop:min-count}, we have $(r-1)+2 \leq \beta$. By maximality of $\alpha$ we have $\beta \leq \alpha$, which gives $|\Gamma| \geq 2r+2$, a contradiction.
\end{proof}

\begin{theorem}[Case $d=2$]\label{thm:d=2}
If $\Gamma$ is $\CB(r)$ and $|\Gamma| \leq 3r+1$, then $\Gamma$ lies on a 2-plane or two skew lines.
\end{theorem}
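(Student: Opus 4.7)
The plan is to induct on $r$. The base cases $r \leq 2$ are Propositions \ref{r=1} and \ref{r=2}, so I assume $r \geq 3$ and the statement for $r-1$. Following Remark \ref{rmk:setup}, let $A$ be a 2-plane containing a maximum number $\alpha \geq 3$ of points of $\Gamma$, and set $\Gamma_A = \Gamma \cap A$, $\Gamma_B = \Gamma \setminus A$, $\beta = |\Gamma_B|$. Assume $\beta > 0$, else $\Gamma \subset A$ and we are done. By the excision property (Proposition \ref{prop:complements}), $\Gamma_B$ is $\CB(r-1)$, so the basic lower bound (Proposition \ref{prop:min-count}) gives $\beta \geq r+1$, whence $\alpha \leq 2r$. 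I split on whether $\Gamma_B$ fits on a line.

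\textbf{Case I: $\beta \leq 2(r-1)+1$.} Then Theorem \ref{thm:d=1} at level $r-1$ puts $\Gamma_B$ on a line $L$. Since $\Gamma_B \cap A = \emptyset$ and $\beta > 0$, $L \not\subset A$, so $L \cap A$ is either empty or a single point $p$. If $L \cap A = \emptyset$, Lemma \ref{lem:disjoint-planes} upgrades $\Gamma_A$ to $\CB(r)$, and because $\alpha \leq 2r \leq 2r+1$, Theorem \ref{thm:d=1} puts $\Gamma_A$ on a line $L'$. Thus $\Gamma \subset L \cup L'$, either two skew lines (done) or two lines meeting at a point (and hence lying on a 2-plane, done). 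If $L \cap A = \{p\}$, apply Proposition \ref{planes meeting at a point} to the pair $(A,L)$ meeting at $p$: both sides away from $p$ are nonempty (since $\alpha \geq 3$ and $p \notin \Gamma_B$ because $p \in A$), giving a dimension-$2$ plane configuration covering $\Gamma$, which after collapsing any intersecting lines is a 2-plane or two skew lines.

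\textbf{Case II: $\beta \geq 2r$, so $\alpha \leq r+1$.} The inductive hypothesis at level $r-1$ applies to $\Gamma_B$, since $|\Gamma_B| \leq 3r+1 - 3 = 3(r-1)+1$; hence $\Gamma_B$ lies on a 2-plane $A'$ or on two skew lines $L_1,L_2$. In the first subcase, the maximality of $\alpha$ yields $\alpha \geq |\Gamma \cap A'| \geq \beta \geq 2r$, contradicting $\alpha \leq r+1$ for $r \geq 3$. In the second subcase, set $\beta_i = |\Gamma_B \cap L_i|$; both are positive (otherwise $\Gamma_B$ lies on a single line, reducing to Case I), so by Lemma \ref{lem:disjoint-planes} each $\Gamma_B \cap L_i$ is $\CB(r-1)$ and therefore $\beta_i \geq r+1$. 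Extending each $L_i$ to a 2-plane and invoking maximality of $\alpha$ gives $\alpha \geq \beta_i \geq r+1$, so $\alpha = r+1$ exactly and $\beta \leq 3r+1 - (r+1) = 2r$; this contradicts $\beta = \beta_1 + \beta_2 \geq 2(r+1)$.

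The main obstacle is the two-skew-lines subcase of Case II: the contradiction works only because the maximality of $\alpha$ simultaneously caps $\beta_i \leq \alpha$ \emph{and} forces $\alpha \geq r+1$ from below via each individual line, pinning down $\alpha = r+1$ exactly so that the $\CB(r-1)$ lower bound on each $\beta_i$ overdetermines $\beta$. Case I is comparatively routine once one notices that Lemma \ref{lem:disjoint-planes} boosts $\Gamma_A$'s Cayley--Bacharach degree back up to $r$ in the disjoint-$L$ situation, and that Proposition \ref{planes meeting at a point} already delivers the remaining configuration when $L$ and $A$ meet.
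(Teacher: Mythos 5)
Your proof is correct and takes essentially the same route as the paper's: induction on $r$ with the maximal-$2$-plane setup of Remark \ref{rmk:setup}, excision to make $\Gamma_B$ satisfy $\CB(r-1)$, Lemma \ref{lem:disjoint-planes} plus the basic lower bound to force at least $r+1$ points on each skew line, maximality-of-$\alpha$ counting to reach the numerical contradictions, and Propositions \ref{prop:disjoint planes case} and \ref{planes meeting at a point} (or their unwindings via Theorem \ref{thm:d=1}) to finish. The only difference is organizational: you split first on the size of $\beta$ rather than on the configuration type containing $\Gamma_B$, but the key lemmas and contradictions coincide with the paper's Cases 1 and 2.
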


\begin{proof}
We have established the cases $r=1$ and $r=2$, so we assume $r \geq 3$ and use induction on $r$.

Let $\alpha, \Gamma_A, A$ and $\beta, \Gamma_B, B$ be as in Remark \ref{rmk:setup}. We may assume that $\beta > 0$. We have $\alpha \geq d+1 = 3$, and so
\[\beta = |\Gamma| - \alpha \leq (3r+1) - 3 = 3(r-1)+1.\]
By excision, $\Gamma_B$ is $\CB(r-1)$, so by induction, $\Gamma_B$ lies on a $2$-plane or on two skew lines. In fact, we will show that it lies on a line. We consider both cases.

\vspace{.2cm}
\textit{Case 1}: $\Gamma_B$ lies on a $2$-plane. By maximality of $\alpha$ and the fact $r \geq 3$, we have 
\[\beta \leq \tfrac{1}{2}|\Gamma| = \tfrac{3}{2}r + \tfrac{1}{2} \leq 2(r-1)+1.\]
Since $\Gamma_B$ is $\CB(r-1)$, it lies on a line by Theorem \ref{thm:d=1}.

\vspace{0.2cm}
\textit{Case 2}: $\Gamma_B$ lies on two skew lines. Suppose each line contains at least one point of $\Gamma_B$. Since $\Gamma_B = \Gamma \setminus A$, $\Gamma_B$ is $\CB(r-1)$ by excision. Since the two lines are skew, the points of $\Gamma_B$ on each individual line are $\CB(r-1)$ by Lemma \ref{lem:disjoint-planes}. Therefore each line contains at least $r+1$ points by Lemma \ref{prop:min-count}. One line together with one point from the other line span a $2$-plane, so by maximality of $\alpha$, $\alpha \geq r+2$. Adding these up, we obtain $\alpha + \beta \geq 3r+4$, a contradiction.

We now finish the proof. If $A$ and $B$ are disjoint, then $\Gamma$ lies on plane configuration of dimension $2$ by Proposition \ref{prop:disjoint planes case}. If $A$ and $B$ meet a point, the same conclusion follows from Proposition \ref{planes meeting at a point}. Otherwise, $A \supseteq B$ and $\Gamma$ lies on $A$ itself. \qedhere

\end{proof}

\begin{theorem}[Case $d=3$]\label{d=3}
If $\Gamma$ is $\CB(r)$ and $|\Gamma| \leq 4r+1$, then $\Gamma$ lies on a plane configuration $\mathcal{P}$ of dimension $3$.
\end{theorem}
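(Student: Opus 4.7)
The natural approach is induction on $r$, with base cases $r \leq 2$ handled by Propositions \ref{r=1} and \ref{r=2} (which in fact give a single $3$-plane). For the inductive step, fix $r \geq 3$ and use the setup of Remark \ref{rmk:setup}: let $A$ be a $3$-plane meeting $\Gamma$ in a maximum number $\alpha \geq 4$ of points, set $\Gamma_A = \Gamma \cap A$, $\Gamma_B = \Gamma \setminus \Gamma_A$ with $\beta = |\Gamma_B|$, and $B = \mathrm{span}(\Gamma_B)$. We may assume $\beta > 0$. Extending $A$ to a hyperplane avoiding $\Gamma_B$ via Proposition \ref{prop:extend-to-hyperplane} and applying excision (Proposition \ref{prop:complements}) shows $\Gamma_B$ is $\CB(r-1)$, and since $\alpha \geq 4$ we have $\beta \leq 4r-3 = 4(r-1)+1$; we also have $\beta \geq r+1$ by Proposition \ref{prop:min-count}.

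The guiding idea is to reduce to Propositions \ref{prop:disjoint planes case} and \ref{planes meeting at a point}: once $\Gamma$ is covered by a pair of linear subspaces intersecting in at most a point, these propositions, together with the already-established cases $d \leq 2$ of the conjecture, directly produce a dimension-$3$ plane configuration for $\Gamma$. So the task is to find a tractable linear subspace $B$ containing $\Gamma_B$ whose intersection with $A$ is benign.

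I would stratify by $\beta$. If $\beta \leq 2r-1$, Theorem \ref{thm:d=1} at parameter $r-1$ places $\Gamma_B$ on a line $L$, which cannot lie inside $A$ and hence meets $A$ in at most a point; Proposition \ref{prop:disjoint planes case} or Proposition \ref{planes meeting at a point} then finishes this case. If $2r \leq \beta \leq 3r-2$, Theorem \ref{thm:d=2} at parameter $r-1$ places $\Gamma_B$ on either a $2$-plane $Q$ or on two skew lines $L_1, L_2$; in this range the maximality of $\alpha$ forces $\alpha \leq 2r+1$, which makes it plausible that $\Gamma_A$ itself sits on a smaller subplane of $A$, keeping the combined configuration within dimension $3$. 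The $2$-plane subcase with $\dim(A \cap Q) = 1$ and the two-skew-lines subcase both require careful intersection analysis, using Lemma \ref{lem:disjoint-planes} to decompose the $\CB(r)$ condition across the skew pieces and Lemma \ref{lem:union-of-two-at-point} when components meet at a point, then applying Theorem \ref{thm:d=1} to the resulting pieces to collapse $\Gamma_A$ to a line.

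The main obstacle is the range $\beta \geq 3r-1$, which squeezes $\alpha \leq r+2$. Here neither the $d=1$ nor $d=2$ case applies to $\Gamma_B$ at parameter $r-1$. My plan is to invoke the inductive hypothesis at $d=3$, $r-1$ on $\Gamma_B$ to obtain some dimension-$3$ configuration $\mc{P}'$ for $\Gamma_B$, then derive a contradiction from the tight bound $\alpha \leq r+2$ via a hyperplane-cover argument: for a fixed $x \in \Gamma_A$, cover $\Gamma \setminus \{x\}$ by hyperplanes containing each component of $\mc{P}'$ together with hyperplanes separating the few remaining points of $\Gamma_A$, and contradict $\CB(r)$ of $\Gamma$ by the total degree count. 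I expect this degenerate regime, together with the two-skew-lines subcase above, to be the technically most delicate part of the proof.
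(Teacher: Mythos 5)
Your overall frame (induction on $r$, the setup of Remark \ref{rmk:setup}, excision to make $\Gamma_B$ a $\CB(r-1)$ set, and the regime $\beta \leq 2r-1$, where Theorem \ref{thm:d=1} puts $\Gamma_B$ on a line and Propositions \ref{prop:disjoint planes case} and \ref{planes meeting at a point} finish) coincides with the paper's proof, but the two regimes you defer are precisely where the real work lies, and your sketches for them do not go through. In the middle regime, when $\Gamma_B$ lies on a $2$-plane $Q$, maximality of $\alpha$ pins the numbers at essentially $\beta = 2r$, $\alpha \in \{2r, 2r+1\}$, and your goal of ``collapsing $\Gamma_A$ to a line'' is unreachable with the tools you cite: $\Gamma \setminus Q$ has up to $2r+1$ points and is only known to be $\CB(r-1)$, so Theorem \ref{thm:d=1} at parameter $r-1$ (which allows at most $2r-1$ points) does not apply, and $\Gamma_A$ is not known to satisfy $\CB(r)$. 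Moreover, even showing $\Gamma_A$ lies on a $2$-plane would not salvage the plan, since two $2$-planes form a configuration of dimension $4$; the paper instead derives a \emph{contradiction} here (its Case 3a): for each $a \in \Gamma_A$ the $3$-plane $\mathrm{span}(B,\{a\})$ meets $\Gamma$ in exactly $\Gamma_B \cup \{a\}$, so excision makes $\Gamma_A \setminus \{a\}$ a $\CB(r-1)$ set of exactly $2r$ points, which by the $d=2$ case lies on a $2$-plane (two skew lines being ruled out by Proposition \ref{prop:min-count}); varying $a$ puts all of $\Gamma_A$ on a $2$-plane, contradicting the maximality of $\alpha$.

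In the regime $\beta \geq 3r-1$ your hyperplane-cover plan fails numerically: covering the at most three components of a dimension-$3$ configuration containing $\Gamma_B$ costs up to $3$ hyperplanes, and separating the leftover points of $\Gamma_A \setminus \{x\}$ costs up to $\alpha - 1 \leq r+1$ more, for a total of roughly $r+4 > r$, so no contradiction with $\CB(r)$ results; you would need $\alpha \leq r-2$, which you cannot guarantee. What closes these cases in the paper is not a degree count of that kind but maximality of $\alpha$ played against spans of sub-configurations, combined with $\CB$ lower bounds on skew pieces: if $\Gamma_B$ lies on a disjoint line and $2$-plane, Lemma \ref{lem:disjoint-planes} and Proposition \ref{prop:min-count} force at least $r+1$ and $2r$ points on the pieces, so $\beta \geq 3r+1$, while the span of the $2$-plane with one point of the line is a $3$-plane forcing $\alpha \geq 2r+1$, giving $|\Gamma| \geq 5r+2$; if $\Gamma_B$ lies on three disjoint lines not in a $3$-plane, the two lines carrying at least $\tfrac{2}{3}\beta$ points span a $3$-plane, so $\tfrac{2}{3}\beta \leq \alpha$ and $\beta \leq (12r+3)/5$, which for $r \geq 5$ drops $\Gamma_B$ into the $d=2$ case (contradicting that it is not in a $3$-plane), and for $r=3,4$ leaves one line with so few points that excising $A$ together with the span of the other two lines violates Proposition \ref{prop:min-count}. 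So your strategy is right in outline, but the middle and high ranges need these maximality-and-excision arguments rather than the ones you propose, and as written the proposal has genuine gaps there.
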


Note that if $r \leq 4$, then we may further conclude $\ell(\mc{P}) \leq 2$ by Corollary \ref{cor:d-lines-bound}.

\begin{proof}
The statement holds for $r=1$ and 2 by Propositions \ref{r=1} and \ref{r=2}. We can thus assume $r \geq 3$ and show the statement holds in general by induction on $r$.

Let $\alpha, \Gamma_A, A$ and $\beta, \Gamma_B, B$ be as in Remark \ref{rmk:setup}. We may assume $\beta > 0$. We have $\Gamma_B = \Gamma \setminus A$, so by excision $\Gamma_B$ is $\CB(r-1)$. So, by the basic lower bound (Proposition \ref{prop:min-count}), $\beta \geq r+1$. On the other hand,
\[\beta \leq |\Gamma| - \alpha \leq 4(r-1)+1\]
so by induction, $\Gamma_B$ lies on a plane configuration of dimension $3$. We consider the possibilities below; in fact we will reduce to the case where $\Gamma_B$ lies on a single line.

\vspace{.2cm}
\textit{Case 1:} $\Gamma_B$ lies on 3 disjoint lines. If $\Gamma_B$ lies on just two lines, then it lies in a 3-plane, which is covered by Case 3. Thus, assume it lies in three disjoint lines but not in a 3-plane. We obtain a contradiction as follows.

Choose two of the lines whose union contains at least $\tfrac{2}{3}\beta$ points of $\Gamma_B$. The span of those lines is a 3-plane, so by maximality of $\alpha$, $\tfrac{2}{3}\beta \leq \alpha$. We calculate 
\[\tfrac{2}{3}\beta + \beta \leq \alpha + \beta \leq 4r+1\]
and so $\beta \leq (12r+3)/5.$ If $r \geq 5,$ this gives $$\beta \leq 3(r-1)+ 1,$$ so by Theorem \ref{thm:d=2}, $\Gamma_B$ lies on a dimension 2 plane configuration, which means it must lie on a 3-plane, contradicting our assumption.

If $r=3$, then by the above inequality, $$\beta \leq (12r+3)/5 = 39/5.$$ So $\beta \leq 7,$ which means at least one of the three lines contains $\leq 2$ points of $\Gamma_B.$ The other two lines span a 3-plane, which means the leftover $\leq 2$ points are $\CB(1)$, a contradiction. An analogous argument shows that if $r=4$, one of the lines has $\leq 3$ points, and whichever of those are not in the span of the other two lines must be $\CB(2)$, a contradiction. This finishes Case 1.

\vspace{.2cm}
\textit{Case 2:} $\Gamma_B$ lies on the union of a line and a 2-plane. Again, we assume $\Gamma_B$ does not lie on a $3$-plane. So, the plane and line must be disjoint.

Therefore, the subsets of $\Gamma_B$ on the plane and the line each satisfy $\CB(r-1)$ by Lemma \ref{lem:disjoint-planes}. Thus there are $\geq 2(r-1)+2 =2r$ points of $\Gamma_B$ on the 2-plane and $\geq r-1+2 =r+1$ on the line, which gives $\beta \geq 3r+1$. The span of one point of $\Gamma_B$ on the line along with the 2-plane is a 3-plane, which by hypothesis implies that $\alpha \geq 2r+1,$ and in turn $|\Gamma| \geq 5r+2,$ a contradiction. 

This leaves one final case.

\vspace{.2cm}
\textit{Case 3:} $\Gamma_B$ lies on a 3-plane.  By maximality of $\alpha$, we have $\beta \leq \alpha$, so since $\beta + \alpha = |\Gamma| = 4r+1$, we obtain $\beta \leq 2r + \tfrac{1}{2}$, hence $\beta \leq 2r$.  We split this up into two cases.

\vspace{.2cm}
\textit{Case 3a:} $\beta = 2r.$ 

Then $\alpha = 2r$ or $\alpha = 2r+1$. Since $r>2,$ we have $2r <  3r-2 =3(r-1)+1,$ which by Proposition \ref{thm:d=2} implies that the $\beta = 2r$ points of $\Gamma_B$ lie on a 2-plane or two skew lines. If two lines, the set of points of $\Gamma_B$ on each line are $\CB(r-1)$, so each line must contain at least $(r-1)+2$ points of $\Gamma_B$, a contradiction. Thus, in this case, $\Gamma_B$ lies on a 2-plane $B$. But then for any point $a \in \Gamma_A$, the $3$-plane $\mathrm{span}(B, \{a\})$ contains exactly the $2r+1$ points $\Gamma_B \cup \{a\}$, otherwise contradicting the maximality of $\alpha$. Thus, by excision,
\[\Gamma \setminus \mathrm{span}(B, \{a\}) = \Gamma_A \setminus \{a\}\]
is $\CB(r-1)$ and has exactly $2r$ points. By the same argument as used for $B$ above, $\Gamma_A \setminus \{a\}$ must lie on a 2-plane. Since $a$ was chosen arbitrarily, and $\alpha > 4$, all of $\Gamma_A$ must lie on a 2-plane, a contradiction, which completes this case.

\vspace{.2cm}
\textit{Case 3b:} $\beta \leq 2r-1.$ 

Since $2r-1 = 2(r-1)+1$, we deduce that $\Gamma_B$ lies on a line by Theorem \ref{thm:d=1}, that is, $B = \mathrm{span}(\Gamma_B)$ is a line.

The desired statement now follows by Proposition \ref{prop:disjoint planes case} (if $A$ and $B$ are disjoint), by Proposition \ref{planes meeting at a point} (if $A$ and $B$ meet at a point) or directly (if $A \supset B$). \qedhere

\end{proof}

\subsection{The case $d=4, r=3$}

Having established Conjecture \ref{conj:main} for $d \leq 3$ and for $r \leq 2$, we consider the next case not covered, namely $d=4$, $r=3$. Our argument uses Propositions \ref{planes meeting at a point} and \ref{prop:disjoint planes case} to handle nearly all cases. We give an ad hoc argument for the remaining case, in which $\Gamma$ lies on two hyperplanes meeting along a line.

\begin{theorem}[Case $d=4$, $r=3$]
Let $d=4$, let $\Gamma$ be $\CB(3)$ and suppose $|\Gamma| \leq (d+1)r+1 = 16$. Then $\Gamma$ lies on a plane configuration of dimension $4$.
\end{theorem}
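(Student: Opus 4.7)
Following the setup of Remark~\ref{rmk:setup} with $d=4$ and $r=3$, let $\alpha\geq5$ be the maximum number of points of $\Gamma$ on a $4$-plane, fix such a $4$-plane $A$, and write $\Gamma_A=\Gamma\cap A$, $\Gamma_B=\Gamma\setminus\Gamma_A$, $\beta=|\Gamma_B|$, $B=\operatorname{span}(\Gamma_B)$. We may assume $\beta>0$; moreover, since Theorem~\ref{d=3} already produces a dimension-$3$ (hence dimension-$4$) plane configuration whenever $|\Gamma|\leq13$, we may further assume $|\Gamma|\in\{14,15,16\}$. By excision $\Gamma_B$ is $\CB(2)$, so Proposition~\ref{r=2} places $\Gamma_B$ on a single plane $B$ of dimension at most $\lceil(\beta-3)/2\rceil$, and Proposition~\ref{prop:min-count} gives $\beta\geq4$. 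Since $\dim B\leq4$, maximality of $\alpha$ applied to any $4$-plane containing $B$ forces $\beta\leq\alpha$, whence $\beta\leq8$ and $\dim B\leq3$.

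I would next dispose of the standard subcases using the key lemmas already established: if $A\cap B=\emptyset$, apply Proposition~\ref{prop:disjoint planes case}; if $A\cap B$ is a single point, apply Proposition~\ref{planes meeting at a point}; and if $B\subseteq A$, then $\Gamma\subseteq A$ directly. In each case $\Gamma$ lies on a dimension-$4$ plane configuration of length at most~$2$. The only remaining, ad hoc case is $\dim(A\cap B)\geq1$ together with $B\not\subseteq A$, which forces $\dim B\in\{2,3\}$ and $\dim L\in\{1,\dim B-1\}$, where $L:=A\cap B$. Here the target dimension-$4$ configuration is a pair of $2$-planes meeting along a line (the ``two hyperplanes meeting along a line'' mentioned before the theorem), so the remaining work is to produce such a configuration containing $\Gamma$.

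The chief obstacle is that Lemma~\ref{lem:union-of-two-at-point}, built from the single-point extension Lemma~\ref{lem:point-extension}, has no direct analogue when $A\cap B$ has positive dimension: extending $F_A$ on $A$ by $F_B=0$ on $B$ now requires $F_A|_L\equiv0$, so the Cayley--Bacharach condition induced on $\Gamma_A$ is only ``relative'' to the subsystem of polynomials vanishing on $L$. To attack the ad hoc case I would (i) derive this modified condition explicitly by the extension argument above: every degree-$r$ polynomial on $A$ vanishing on $L$ and on all but one point of $\Gamma_A$ must vanish at the missing point; (ii) project $A$ from $L$ to a smaller projective space, translating the relative condition into a genuine Cayley--Bacharach condition on the image, to which the already-proved low-dimensional cases of the conjecture apply, forcing the image of $\Gamma_A$ onto a line; and (iii) conclude that $\Gamma_A$ lies on a $2$-plane $A'\subseteq A$ containing $L$, so that $\Gamma\subseteq A'\cup B$ realizes the desired configuration of two $2$-planes meeting along the line $L$. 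The most delicate subcase is expected to be $\dim B=3$, where the projection from $L$ interacts nontrivially with multiplicities in its fibers; iterated excision by hyperplanes extending $L$ should reduce it to the plane-plus-line setting where Propositions~\ref{prop:disjoint planes case} and~\ref{planes meeting at a point} can again be invoked.
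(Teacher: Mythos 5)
Your setup, counting ($\beta \geq 4$, $\beta \leq \alpha$, $\dim B \leq 3$), and the dispatch of the subcases $A \cap B = \emptyset$, $A \cap B$ a point, and $B \subseteq A$ via Propositions \ref{prop:disjoint planes case} and \ref{planes meeting at a point} all match the paper. But the remaining case $\dim(A\cap B) \geq 1$ is the actual content of the theorem, and your sketch of it has genuine gaps. First, the descent step (ii) fails as stated: the relative condition from (i) lets you add back one \emph{point of $\Gamma_A$} at a time, whereas a degree-$3$ form on the projected space that misses one \emph{image} point pulls back to a form missing every point of $\Gamma_A \setminus L$ in that fiber, so the image need not be $\CB(3)$ when some fiber of the projection contains two or more points; you acknowledge this, but ``iterated excision by hyperplanes extending $L$'' is not an argument. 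Second, even granting descent, the cardinality hypotheses of the low-$d$ cases are never checked: to force the image onto a line via Theorem \ref{thm:d=1} you need at most $2r+1=7$ image points, while your bounds only give $\beta \leq 8$ and $\dim B \leq 3$, leaving $|\Gamma_A \setminus L|$ as large as $10$--$12$ (and the $d=2$ case would only place the image on a $2$-plane, i.e.\ no constraint in $\mathbb{P}^2$). Third, the geometry of (iii) is inconsistent: if the image of $\Gamma_A$ under projection of $A \cong \mathbb{P}^4$ from the line $L$ lies on a line, then $\Gamma_A$ lies on the cone over that line, a $3$-plane containing $L$, not a $2$-plane; moreover $\Gamma_A$ lying on a $3$-plane would contradict the maximality of $\alpha$ (extend by a point of $\Gamma_B$), so the chain as written cannot deliver the claimed pair of $2$-planes.

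For comparison, the paper never projects. It uses the sharper count $\beta + (4 - \dim B) \leq \alpha$, obtained by extending $B$ to a $\mathbb{P}^4$ through points of $\Gamma_A$, to reduce the positive-dimensional-intersection case to $(\alpha,\beta) \in \{(8,6),(9,7),(9,6),(10,6)\}$ with $\dim B = 2$ and $A \cap B$ a line $L$. It then excises $B$ (or $\mathrm{span}(A',B)$) so that the leftover points are $\CB(2)$ and, by the $r=2$ cases, lie on a $3$-plane $A'$ meeting $B$ in a point, after which Proposition \ref{planes meeting at a point} applies; the cases where this is impossible are killed by counting contradictions against the maximality of $\alpha$. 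Your outline would need either this finer numerical case analysis or a genuinely new mechanism for positive-dimensional intersections before it constitutes a proof.
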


\begin{proof}
We may assume $|\Gamma| \geq 14$; otherwise we are done by the case $r=3, d=3$.

Let $\alpha, \Gamma_A, A$ and $\beta, \Gamma_B, B$ be as in Remark \ref{rmk:setup}. We assume $\beta > 0$, so in particular $\Gamma_B$ is $\CB(2)$, $\beta \geq 4$ and $\alpha \leq 12$.

Most cases are handled as follows. First, if $\dim B = 1$, we are done by Proposition \ref{prop:disjoint planes case} if $A \cap B$ is empty and by Proposition \ref{planes meeting at a point} if $A \cap B$ is a point. Since $\Gamma_B$ is $\CB(2)$, we may in particular assume $\beta \geq 6$, and we also have $\dim(B) \leq \lceil \tfrac{\beta-3}{2} \rceil$. Second, we may extend $B$ to a $\mathbb{P}^4$ by adding points. So, by maximality of $\alpha$, we get
\[\beta + 4 - \lceil \tfrac{\beta-3}{2} \rceil \leq \beta + (4 - \dim(B)) \leq \alpha.\]
By numerical consideration of $\alpha$ and $\beta$, this covers nearly every case. The remaining ones are below; we also assume $\dim(B) \geq 2$ and $\dim(A \cap B) \geq 1$.

{$\mathbf{ (\alpha, \beta) = (8, 6) \text{ or } (9, 7):}$} By $\CB(2)$, $\dim(B) = 2$. We must have $\Gamma \cap A \cap B$ empty or we contradict the maximality of $\alpha$ by extending $\Gamma_B$ by two points. Therefore we can write $\Gamma_A = \Gamma \setminus B$, so $\Gamma_A$ is $\CB(2)$ by excision. Then since $|\Gamma_A| \leq 9$, $\Gamma_A$ lies on a $\mathbb{P}^3$ by the case $r=2, d=3$ of the conjecture (Proposition 5.4). This contradicts the choice of $A$.

{$\mathbf{(\alpha, \beta) = (9, 6) \text{ or } (10, 6):}$} By $\CB(2)$ and the discussion above, $\dim B = 2$ and $A \cap B$ is a line $L$. By construction (as in Remark \ref{rmk:setup}), any points of $\Gamma$ lying on $L$ have been counted among the $\alpha$ points of $\Gamma_A$.

Suppose $\Gamma \cap L$ is nonempty. Then $|\Gamma \setminus B| \leq 9$, hence by $\CB(2)$ lies on a $3$-plane $A' \subset A$. Then $A'$ intersects $L$ in at least a point, and does not contain $L$ since $\Gamma_A$ does not lie on a $\mathbb{P}^3$. Thus $A'$ and $B$ cover $\Gamma$ and intersect in a point, so we are done by Lemma \ref{planes meeting at a point}.

Suppose $\Gamma \cap L$ is empty. We are done as above if $\alpha=9$. For $\alpha=10$, we consider all $2$-planes $A'$ such that $L \subset A' \subset A$ and $A'$ contains at least one point of $\Gamma_A$. Then the set $\Gamma \setminus \mathrm{span}(A', B)$ is $\CB(2)$, hence this set lies on some plane $C \cong \mathbb{P}^3$ (since $\alpha-1 \leq 9$). Since $\Gamma_A$ does not lie on a $\mathbb{P}^3$, we must have $C \not\supset \Gamma \cap A'$ (this containment would imply $C \supset \Gamma_A$). So, by $\CB(1)$ (excising $B$ and $C$), $\Gamma \cap A'$ contains at least 3 points. Since this is true for any choice of $A'$, combining $B$ with two such choices yields a $\mathbb{P}^4$ containing at least $\beta+3+3 > \alpha$ points, a contradiction.
\end{proof}

Finally, to check that the plane configuration $\mc{P}$ can be taken to have length $\ell(\mc{P}) \leq 2$: we have $\ell(\mc{P}) \leq 3$ by Corollary \ref{cor:d-lines-bound}. Similar considerations show that the case $\ell(\mc{P}) = 3$ (i.e., two lines and a $2$-plane) does not arise.

\section{Codimension two complete intersections} \label{sec:codim-2-ci}

In this section, we give proofs of Proposition \ref{codim2prop} and Theorem \ref{codim2thm}, which quickly follow from the main theorem and conjecture.

\begin{proof}[Proof of Proposition \ref{codim2prop}] 

Recall that $X \subset \P^{n+2}$ is a complete intersection of hypersurfaces $Y_a$ and $Y_b$ of degrees $a$ and $b$, respectively. 

For part (a), let $f: X \dashrightarrow \P^n$ be a dominant rational map, and let $\Gamma \subset X$ be a general fiber of $f$. The condition of ``lying on a plane configuration of dimension $b-1$'' is closed, so it is enough to show it for $\Gamma$ to conclude it for \emph{all} fibers. The result of Bastianelli \cite[Proposition 4.2]{Bas} mentioned in the introduction implies that $\Gamma$ satisfies the Cayley--Bacharach condition with respect to $K_X = (a + b - n -3)H$, where $H$ is a hyperplane section. That is, $\Gamma$ is $\CB(a + b - n -3)$. We claim:

$$|\Gamma| \leq a(b-1) \leq ((b-1) +1)(a+b-n-3) + 1 = (d+1)r+1.$$
The first inequality is by hypothesis for (a), which requires $\deg(f)$, and thus the number of points in a general fiber, to be $\leq a(b-1)$. The second inequality is implied by the assumption $a+1 \geq b(3+n-b)$. We then apply Conjecture \ref{conj:main} to conclude that $\Gamma$ lies on a plane configuration of dimension $b-1$, which completes the proof of part (a).

For part (b), we first note that the statement is trivial if $n+2 \leq b-1$ since we have $\Gamma \subseteq \mathbb{P}^{n+2}$, so we can assume that $n+2 \geq b.$ The statement is also trivial if $\dim(X) = n = 0,$ since $X$ is irreducible and thus a point. So additionally, assume $n+ 2 \geq 3.$

It suffices to show that the minimal degree of a dominant rational map $X \dashrightarrow \mathbb{P}^n$ (called the \emph{degree of irrationality of $X$} or $\mathrm{irr}(X)$) satisfies $\mathrm{irr}(X) \leq a(b-1)$. For this, we exhibit such a rational map of degree $\leq a(b-1)$.

Let $F(Y_b)$ be the Fano variety of lines on $X$. It is a well-known lemma that $\dim(F(Y_b)) \geq 2(n+2)-b-3.$ In particular, as long as $2(n+2)-b-3 \geq 0$, $F(Y_b)$ is non-empty. See, for example, the beginning of \cite[Section 4]{Beh} for an explanation of this fact. This inequality is guaranteed by our assumption $n+2 \geq b$.

So let $\ell \subset Y_b$ be a line, so projection from $\ell$ yields a dominant rational map 
\[f : X \dashrightarrow \P^n.\] Notice that either $\ell \cap X$ has length $a$ or else $\ell$ is contained in $X$. In the first case, $\deg(f) = a(b-1)$. In the second case, $\deg(f) = (a-1)(b-1)$ by the excess intersection formula. Either way, $\deg(f) \leq a(b - 1).$ This completes the proof.
\end{proof}

Theorem \ref{codim2thm} is a special case of Proposition \ref{codim2prop}.

\begin{proof}[Proof of Theorem \ref{codim2thm}]
Setting $b = 4$ and $Y_b = Y$ in Proposition \ref{codim2prop}, we obtain $d = 3$ and $r = a-n+1.$ By Theorem \ref{thm:main}, the conjecture holds in this case, and the inequality $a+1 \geq b(3+n-b)$ follows from the hypothesis $a \geq 4n-3$ (again by setting $b=4$). Thus, we can apply Proposition \ref{codim2prop}, which completes the proof.
\end{proof}

\section{Further considerations} \label{sec:conclusion}

As stated in the introduction, an immediate question raised by our Conjecture is to determine how the length of the plane configuration relates to $r$ and $d$.

\begin{question}
Let $\Gamma \subset \mathbb{P}^n$ be a finite set satisfying $\CB(r)$ and such that $|\Gamma| \leq (d+1)r+1$. Assuming Conjecture \ref{conj:main} holds, what can be said about the minimum length of a plane configuration $\mc{P}$ containing $\Gamma$?
\end{question}

Our main result shows that $\ell(\mc{P}) \leq 2$ for small values of $r$ and $d$; it's straightforward to exhibit examples in which $\ell(\mc{P}) \geq 3$ for higher values of $r$ and $d$. Corollary \ref{cor:d-lines-bound} shows, at the other extreme, that the case $\ell(\mc{P}) = d$ occurs only outside the range 
\[1 \leq \frac{r}{d-1} \leq 2.\]
It would be interesting to generalize this result to a bound on $\ell(\mc{P})$ dependent on $\tfrac{r}{d-1}$ or similar data.

\subsection{The Cayley--Bacharach locus of the Hilbert scheme}

Let $\mc{H} = \mathrm{Hilb}(\mb{P}^n, m)$ be the Hilbert scheme of $m$ points in $\mathbb{P}^n$. The Cayley--Bacharach condition gives a locally-closed subset $\mc{CB}(r) \subset \mc{H}$ corresponding to $m$-tuples of distinct points satisfying $\CB(r)$. 

\begin{problem}
Understand the geometry of $\mc{CB}(r)$ and its closure in $\mc{H}$, in particular its dimension, smooth locus, irreducible components and general points.
\end{problem}

We also observe that the conclusion of Conjecture \ref{conj:main}, of being contained in a plane configuration, is a \emph{closed} condition, hence applies to $\overline{\mc{CB}(r)}$ if it applies to $\mc{CB}(r)$. Both loci $\mc{CB}(r)$ and $\overline{\mc{CB}(r)}$ can be further decomposed into closed subsets according to the combinatorial type of the plane configuration,
\[
\mc{CB}(r) = \bigcup_{\ell, \vec{d}} \mc{CB}(r)_{\ell, \vec{d}},
\qquad
\overline{\mc{CB}(r)} = \bigcup_{\ell, \vec{d}} \overline{\mc{CB}(r)}_{\ell, \vec{d}}
\]
where $\ell$ denotes the length, $\vec{d} = (d_1, \ldots, d_\ell)$ the tuple of dimensions and the resulting loci parametrize those $\Gamma$ for which there exists a plane configuration $\mc{P}$ of the specified type, containing $\Gamma$.

\begin{problem}
Determine for which combinatorial types the corresponding subsets $\mc{CB}(r)_{\ell, \vec{d}}$ and $\overline{\mc{CB}(r)}_{\ell, \vec{d}}$ are nonempty, and determine their dimensions and geometry.
\end{problem}

There is also a natural analog of the Cayley--Bacharach condition for a subscheme $\xi \subseteq \mathbb{P}^n$ of finite length $m$: we may say $\xi$ is $\CB(r)$ if, whenever a homogeneous degree $r$ polynomial $F$ vanishes on a closed subscheme $\xi' \subseteq \xi$ of length $m-1$, then $F$ vanishes on $\xi$. This condition is again only \emph{locally} closed and gives another locus $\mc{CB}'(r)$ containing $\mc{CB}(r)$. 

\begin{question}
Which $\CB(r)$ subschemes can be smoothed to \emph{reduced} $\CB(r)$ subschemes? That is, what is $\mc{CB}'(r) \cap \overline{\mc{CB}(r)}$?
\end{question}

\subsection{Matroids}

Conjecture \ref{conj:main} and several of our arguments are highly suggestive of the theory of matroids and matroid varieties \cite{GGMS}. It would be interesting to understand this connection. Variants of the classical Cayley--Bacharach theorem have also recently arisen in studying ideals of matroid varieties via the Grassmann--Cayley algebra \cite{STW}. A basic question is as follows:

\begin{question}
Which matroids arise from sets of points $\Gamma \subseteq \mathbb{P}^n$ satisfying the assumptions of Conjecture \ref{conj:main}?
\end{question}

We may also approach the Cayley--Bacharach condition itself in purely matroidal terms. We say that a matroid $M$ satisfies the {\bf matroid Cayley--Bacharach} condition $\mCB(r)$ if, whenever a union of $r$ flats contains all but one point of $M$, the union contains the last point. (For a representable matroid, this is equivalent to restricting the geometric Cayley--Bacharach condition to fully reducible hypersurfaces.)

\begin{question}
Does the analog of Conjecture \ref{conj:main} hold for matroids $M$ satisfying $\mCB(r)$? That is, must $M$ be covered by a union of flats of specified dimensions? 
\end{question}

Some of the arguments of this paper are effectively entirely matroid-theoretic and give positive answers to this question in small cases. The arguments that are not matroidal (notably Lemma \ref{lem:union-of-two-at-point}) can sometimes be replaced, at least for low values of $d$ and $r$, by substantially longer matroidal arguments.

\subsection{Curves}

Finally, we revisit the question of describing when $\Gamma$ lies on a union of curves of specified degree and arithmetic genus. As shown in \cite{SU}, if $\Gamma$ is $\CB(r)$ and $|\Gamma| \leq \tfrac{5}{2}r+1$, then $\Gamma$ lies on a curve, possibly reducible, of degree at most $2$. 

These considerations are closely related to $\Gamma$ not imposing independent conditions on degree $r$ polynomials, a weaker condition than $\CB(r)$. For example, Castelnuovo \cite{Cas} gave conditions under which, if a finite set $\Gamma$ fails to impose independent conditions on quadrics, then it lies on a rational normal curve.

\begin{question} \label{q:curves}
Let $c > 0$ be a positive real number. Let $\Gamma \subseteq \mathbb{P}^n$ be a finite set of points satisfying $\CB(r)$ and such that $|\Gamma| \leq cr+1$. What is the minimum $d$ such that every such $\Gamma$ lies on a (possibly reducible) curve of degree $d$? What is the arithmetic genus of such a curve?
\end{question}

Thus the result in \cite{SU} is that for $c = \tfrac{5}{2}$, the minimum degree is $2$ and the curve has genus $0$. If it could be shown that for $c=d+1$, $\Gamma$ lies on a degree-$d$ curve, this would imply Conjecture \ref{conj:main} as such a curve lies on a plane configuration of dimension $d$. Unfortunately, this is false: for $r=d=2$, $7$ general points on a $2$-plane satisfy $\CB(2)$, but do not lie on a conic or a union of two lines. For $r=2$, $d=3$, Example \ref{ex:elliptic-curve} of nine general points on a quartic elliptic curve $E \subset \mathbb{P}^3$ is similar: $E$ itself, of degree $4$ and genus $1$, is a minimal degree curve containing $\Gamma$.

This question can be formulated more generally: it would be interesting to know when $\Gamma$ lies on special surfaces or varieties of higher dimension, on special plane configurations, and so on.

\begin{question}
With $c$ and $\Gamma$ as in Question \ref{q:curves}, what can be said about the geometry of $\Gamma$?
\end{question}

%    Bibliographies can be prepared with BibTeX using amsplain,
%    amsalpha, or (for "historical" overviews) natbib style.
% \bibliographystyle{amsplain}
\bibliographystyle{amsalpha}
\bibliography{CBbib.bib}

\end{document}